\numberwithin{equation}{section}
\newtheorem{thm}{Theorem}[section]
\newtheorem{lemma}{Lemma}[section]
\newtheorem{remark}{Remark}[section]
\newtheorem*{proof of theorem1.1}{proof of theorem1.1}
\newtheorem*{proof of theorem1.2}{proof of theorem1.2}
\date{}
\begin{document}
\title{\large\textbf {Normalized solutions to Schr\"{o}dinger systems with linear and nonlinear couplings}\thanks{Supported by National Natural Science Foundation of China(11771428, 12031015, 12026217), Email:yuanzhaoyang17@mails.ucas.ac.cn(Zhaoyang Yun); zzt@math.ac.cn(Zhitao Zhang)}}
\author{Zhaoyang Yun$^{a,b}$~~Zhitao Zhang$^{a,b,}\thanks{Corresponding author}$}
\maketitle
\centerline{\small $^{a}$ HLM, Academy of Mathematics and Systems Science, the Chinese}
\centerline{\small Academy of Sciences, Beijing 100190;}
 \centerline{\small $^{b}$  School of Mathematical Sciences, University of Chinese}
\centerline{\small Academy of Sciences, Beijing 100049, P.R. China}
% \centerline{\small Email:yuanzhaoyang17@mails.ucas.ac.cn(Zhaoyang Yun); \\ zzt@math.ac.cn(Zhitao Zhang)}
\begin{abstract}

In this paper, we study important Schr\"{o}dinger systems with linear and nonlinear couplings
\begin{equation}\label{eq:diricichlet}
\begin{cases}
-\Delta u_1-\lambda_1 u_1=\mu_1 |u_1|^{p_1-2}u_1+r_1\beta |u_1|^{r_1-2}u_1|u_2|^{r_2}+\kappa (x)u_2~\hbox{in}~\mathbb{R}^N,\\
-\Delta u_2-\lambda_2 u_2=\mu_2 |u_2|^{p_2-2}u_2+r_2\beta |u_1|^{r_1}|u_2|^{r_2-2}u_2+\kappa (x)u_1~ \hbox{in}~\mathbb{R}^N,\\ u_1\in H^1(\mathbb{R}^N), u_2\in H^1(\mathbb{R}^N),\nonumber
\end{cases}
\end{equation}
with the condition
$$\int_{\mathbb{R}^N} u_1^2=a_1^2, \int_{\mathbb{R}^N} u_2^2=a_2^2,$$
where $N\geq 2$, $\mu_1,\mu_2,a_1,a_2>0$, $\beta\in\mathbb{R}$, $2<p_1,p_2<2^*$, $2<r_1+r_2<2^*$, $\kappa(x)\in L^{\infty}(\mathbb{R}^N)$ with fixed sign and $\lambda_1,\lambda_2$ are Lagrangian multipliers. We use Ekland variational principle to prove this system has a normalized radially symmetric solution  for $L^2-$subcritical case when $N\geq 2$,
and use minimax method to prove this system has a normalized radially symmetric  positive solution  for $L^2-$supercritical case when $N=3$, $p_1=p_2=4,\ r_1=r_2=2$.
\end{abstract}
\textbf{Keywords: }Nonlinear Schr\"{o}dinger systems; Normalized solutions; Ekland variational principle; Minimax principle.\\
\textbf{AMS Subject Classification(2010): }35J15, 35J47, 35J57
\section{Introduction}
Schr\"{o}dinger systems of the form  which are related with Bose-Einstein condensates
\begin{equation}\label{eq:diricichlet}
\begin{cases}
-\Delta u_1-\lambda_1 u_1=f_1(u_1)+\partial_1 F(u_1,u_2)\ \textup{in}\ \mathbb{R}^N,\\
-\Delta u_2-\lambda_2 u_2=f_2(u_2)+\partial_2 F(u_1,u_2)\ \textup{in}\ \mathbb{R}^N,\\
u_1\in H^1(\mathbb{R}^N), u_2\in H^1(\mathbb{R}^N),
\end{cases}
\end{equation}
where $N\geq 2$, have been concerned by many mathematicians in recent years. One motivation driving the search for (1.1) is to find the solutions of the time-dependence system of coupled nonlinear Schr\"{o}dinger equations.\par
 In \cite{B-2} T. Bartsch and L. Jeanjean studied the case when $f_1(u_1)=\mu_1 u_1^3$, $f_2(u_2)=\mu_2 u_2^3$, $F(u_1,u_2)=\frac{1}{2}\beta u_1^2 u_2^2$, $N=3$ together with the conditions
\begin{equation}
\int_{\mathbb{R}^3} u_1^2=a_1^2, \int_{\mathbb{R}^3} u_2^2=a_2^2,
\end{equation}
where $a_1$, $a_2$, $\mu_1$, $\mu_2$, $\beta>0$, $\lambda_1$ and $\lambda_2$ are Lagrange multipliers, they proved that there exists a $\beta_1>0$ depending on $a_i$ and $\mu_i$ such that if $0<\beta<\beta_1$ then (1.1)-(1.2) has a solution $(\lambda_1,\lambda_2,\bar u_1,\bar u_2)$ where $\lambda_1$, $\lambda_2<0$ and $\bar u_1$ and $\bar u_2$ are both postive and radially symmetric and there exists $\beta_2>0$ depending on $a_i$ and $\mu_i$ such that, if $\beta>\beta_2$, then (1.1)-(1.2) has a solution $(\lambda_1,\lambda_2,\bar u_1,\bar u_2)$ where $\lambda_1$, $\lambda_2<0$ and $\bar u_1$ and $\bar u_2$ are both positive and radially symmetric, other interesting results for normalized solutions can be found in \cite{B-1}, \cite{B-4}, \cite{B-5}, \cite{B-6}, \cite{B-7}, \cite{G-Z}, \cite{L-Z}, \cite{NTV14}, \cite{NTV15}, \cite{NTV18}, \cite{PV17}, \cite{ZhZh} and references therein.
\par
In \cite{LZ-11} K. Li and Z. T. Zhang studied the case when $\lambda_1$ and $\lambda_2$ are fixed and $f_1(u_1)=\mu_1 u_1^3$, $f_2(u_2)=\mu_2 u_2^3$, $F(u_1,u_2)=\frac{1}{2}\beta u_1^2 u_2^2+\kappa u_1u_2$,  where $\lambda_1$, $\lambda_2$, $\mu_1$, $\mu_2>0$ and $\beta$, $\kappa\in\mathbb{R}$, they proved that
when $0<|\kappa|<\sqrt{\lambda_1\lambda_2}$ (1.1) has a solution $(u_1,u_2)$ such that $u_1$, $u_2>0$ if $\kappa>0$, and $u_1>0$, $u_2<0$ or $u_1<0$, $u_2>0$ if $\kappa<0$, other results of Schr\"{o}dinger systems with linear and nonlinear couplings can be found in \cite{LZ-2}, \cite{TZ-1}, \cite{ZZ} etc. These  papers inspire us to consider the Schr\"{o}dinger systems with linear and nonlinear couplings
%More generally we can consider
\begin{equation}\label{yun1-18-1}
\begin{cases}
-\Delta u_1-\lambda_1 u_1=\mu_1 |u_1|^{p_1-2}u_1+r_1\beta |u_1|^{r_1-2}u_1|u_2|^{r_2}+\kappa (x)u_2\ \textup{in}\ \mathbb{R}^N,\\
-\Delta u_2-\lambda_2 u_2=\mu_2 |u_2|^{p_2-2}u_2+r_2\beta |u_1|^{r_1}|u_2|^{r_2-2}u_2+\kappa (x)u_1\ \textup{in}\ \mathbb{R}^N,\\
 u_1\in H^1(\mathbb{R}^N), u_2\in H^1(\mathbb{R}^N),
\end{cases}
\end{equation}
with the condition
\begin{equation}\label{yun1-18-6}
\int_{\mathbb{R}^N} u_1^2=a_1^2, \int_{\mathbb{R}^N} u_2^2=a_2^2.
\end{equation}
Let $H^1(\mathbb{R}^N)$ be the usual Sobolev space and denote  its norm by
$$\|u\|:=\|u\|_{H_r^1}:=(|\nabla u|_2^2+|u|_2^2)^{1/2}.$$
 In order to use the compact embedding in whole space, we denote the radially symmetric subspace as follows
$$H_r^1:=H_{rad}^1(\mathbb {R}^N):=\{u\in H^1(\mathbb{R}^N): u(x)=u(|x|)\}.$$

We set
 $$S_i:=S_{a_i}:=\{u\in H_r^1: |u|_2=a_i\},\ i=1,2,$$
 where $|u|_p:=|u|_{p,\mathbb{R}^N}:=(\int_{\mathbb{R}^N}|u|^p)^{{1}/{p}},\ p>1$.
 From standard variational arguments we know that critical points of the following functional on $S_1\times S_2$ are weak solutions of \eqref{yun1-18-1}-\eqref{yun1-18-6},
\begin{equation}
\begin{split}
J(u_1,u_2)=&\frac{1}{2}(\int_{\mathbb{R}^N} |\nabla u_1|^2+\int_{\mathbb{R}^N} |\nabla u_2|^2)-\frac{\mu_1}{p_1}|u_1|_{p_1}^{p_1}-\frac{\mu_2}{p_2}|u_2|_{p_2}^{p_2}\\
&-\beta\int_{\mathbb{R}^N} |u_1|^{r_1}|u_2|^{r_2}-\int_{\mathbb{R}^N} \kappa(x) u_1u_2.\nonumber
\end{split}
\end{equation}
 We assume $\kappa(x)=\kappa(|x|)$, $\kappa(x)\in L^{p}(\mathbb{R}^N)$ where $N/2<p<\infty$ from the Palais's principle of symmetric criticality, the critical point of $J|_{S_1\times S_2}$ on $H_r^1\times H_r^1$ is the critical point of $J|_{S_1\times S_2}$ on $H^1(\mathbb{R}^N)\times H^1(\mathbb{R}^N)$. We just need to look for critical points of $J|_{S_1\times S_2}$ on $H^1_r\times H^1_r$.
 By regularity theory of elliptic equations, weak solutions of \eqref{yun1-18-1}-\eqref{yun1-18-6} are classical.
It is easy to see that $S_1\times S_2$ is a $C^2$ Finsler manifold modeled on the Hilbert space $H^1_r\times H^1_r$. We consider the functional $J(u_1,u_2)$ on the manifold $S_1\times S_2$, by Gagliardo-Nirenberg  inequality we have that $J_{S_1\times S_2}$ is bounded from below for \textbf{$L^2$-subcritical case:} $2<p_1<2+4/N$, $2<p_2<2+4/N$ and $2<r_1+r_2<2+4/N$, we will use minimizing method and Ekland
variational principle to get a minimum point of $J_{S_1\times S_2}$. For \textbf{$L^2$-supercritical case:} $2+4/N<p_1<2^{\star}$, $2+4/N<p_2<2^{\star}$ and $2+4/N<r_1+r_2<2^{\star}$, where $2^*=2N/(N-2)$, the functional $J_{S_1\times S_2}$ is not bounded from below, we try to construct a mountain pass structure of $J$ on the manifold $S_1\times S_2$ and by the minimax theory on the Finsler manifold which was introduced in \cite{GH-1} and to obtain the critical point of $J$ on $S_1\times S_2$.
%  From above discussion we study the following two cases:
% \textbf{$L^2-$ subcritical case:} $2<p_1<2+4/N$, $2<p_2<2+4/N$ and $2<r_1+r_2<2+4/N$.
%\textbf{$L^2-$ supercritical case:} $2+4/N<p_1<2^{\star}$, $2+4/N<p_2<2^{\star}$ and $2+4/N<r_1+r_2<2^{\star}.$

\iffalse\textbf{Structure of this paper:} In the section 3 we consider the $L^2-$ subcritical case in $\Omega\subset \mathbb{R}^N$ is bounded smooth domain and $\Omega=\mathbb{R}^N$ by using Ekland variational principle. In the section 4 we consider some cases in the $L^2$ supercritical case in $\Omega\subset \mathbb{R}^N$ is bounded smooth domain and $\Omega=\mathbb{R}^3$ by using mountain pass theorem.\fi

In section 2, we  consider the $L^2$-subcritical case by Ekland variational principle on the manifold $S_1\times S_2$,
%As $\Omega$ is a bounded smooth domain we have following two theorems.
%\begin{thm}
%Assume that $2<p_1<2+4/N$, $2<p_2<2+4/N$, $2<r_1+r_2<2+4/N$, $\Omega\subset \mathbb{R}^{N}$, $N\geq 1$ is a bounded smooth domain and $0\leq\kappa(x)\in L^{\infty}(\Omega)$,  then \eqref{yun1-18-1} and \eqref{yun1-18-6} has a solution $(\lambda_1,\lambda_2,u_{1,0},u_{2,0})$, where $u_{1,0}\geq 0$, $u_{2,0}\geq 0$ and $(u_{1,0},u_{2,0})$ is a minimizer of $J(u_1,u_2)$ on $S_1\times S_2$.
%\end{thm}
%\begin{thm}
%Assume that $2<p_1<2+4/N$, $2<p_2<2+4/N$, $2<r_1+r_2<2+4/N$, $\Omega\subset \mathbb{R}^{N}$, $N\geq 1$ is a bounded smooth domain and $0\geq\kappa(x)\in L^{\infty}(\Omega)$,  then \eqref{yun1-18-1} and \eqref{yun1-18-6} has a solution $(\lambda_1,\lambda_2,u_{1,0},u_{2,0})$ where $u_{1,0}\geq 0$, $u_{2,0}\leq 0$ or $u_{1,0}\leq 0$, $u_{2,0}\geq 0$ and $(u_{1,0},u_{2,0})$ is a minimizer of $J(u_1,u_2)$ on $S_1\times S_2$.
%\end{thm}
by  Liouville types theorems in \cite{IK-1}, we obtain the following three theorems.
\begin{thm}\label{th3-23-1}
Assume $\beta>0$, $\kappa(x)>0$, $\kappa(x)=\kappa(|x|)$ and $\kappa(x)\in L^{{p}}(\mathbb{R}^{N})\cap L^{{\infty}}(\mathbb{R}^{N})$, $N/2<p<\infty$.
If one of the following assumptions is satisfied:
\par
(a) $2\leq N\leq 4$ and $2<p_1$, $p_2$, $r_1+r_2<2+4/N$, or
\par
(b) $N\geq 5$ and $2<p_1$, $p_2<2+2/(N-2)$ and $r_1+r_2<2+4/N$.

Then \eqref{yun1-18-1}-\eqref{yun1-18-6} has a solution $(\lambda_{1,0},\lambda_{2,0},u_{1,0},u_{2,0})$ such that  $\lambda_{1,0}$, $\lambda_{2,0}<0$ and $u_{1,0}$, $u_{2,0}>0$.
\noindent Moreover, $ u_{1,0}$ and $ u_{2,0}$ are radially symmetric.
\end{thm}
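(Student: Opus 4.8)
The plan is to realize the solution as a global minimizer of $J$ on the constraint manifold $S_1\times S_2$ via the Ekeland variational principle, and then to analyze the resulting Lagrange multipliers and the sign of the components. First I would show that $J|_{S_1\times S_2}$ is bounded from below and coercive in an appropriate sense: using the Gagliardo--Nirenberg inequality on $H^1_r$, each term $\frac{\mu_i}{p_i}|u_i|_{p_i}^{p_i}$ is controlled by $\varepsilon|\nabla u_i|_2^2 + C_\varepsilon a_i^{\text{const}}$ with exponent strictly less than $2$ on the gradient (this is exactly where $p_i<2+4/N$ enters), and similarly $\beta\int|u_1|^{r_1}|u_2|^{r_2}$ is controlled since $r_1+r_2<2+4/N$; the linear coupling term $\int\kappa(x)u_1u_2$ is bounded by $|\kappa|_\infty a_1 a_2$ (or by the $L^p$ norm of $\kappa$ via Hölder and Sobolev), so it contributes only a bounded perturbation. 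Hence $m:=\inf_{S_1\times S_2} J>-\infty$. Set $m:=\inf_{S_1\times S_2}J$.

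Next I would run the Ekeland variational principle on the complete $C^2$ Finsler manifold $S_1\times S_2$ to produce a Palais--Smale sequence $(u_{1,n},u_{2,n})$ at level $m$: $J(u_{1,n},u_{2,n})\to m$ and $\|dJ|_{S_1\times S_2}(u_{1,n},u_{2,n})\|\to 0$. Coercivity of the gradient part (from the same Gagliardo--Nirenberg estimates) gives that the sequence is bounded in $H^1_r\times H^1_r$. The crucial gain from working in the radial subspace is the compact embedding $H^1_r(\mathbb{R}^N)\hookrightarrow L^q(\mathbb{R}^N)$ for $2<q<2^*$, so up to a subsequence $(u_{1,n},u_{2,n})\rightharpoonup (u_{1,0},u_{2,0})$ weakly in $H^1_r\times H^1_r$ and strongly in $L^{p_1}\times L^{p_2}$ and in the mixed Lebesgue space relevant to the $\beta$-term. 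The constrained PS condition together with strong $L^q$-convergence of the nonlinear terms then forces strong $H^1_r$-convergence, so $(u_{1,0},u_{2,0})\in S_1\times S_2$ with $J(u_{1,0},u_{2,0})=m$, and the Lagrange multiplier rule yields $\lambda_{1,0},\lambda_{2,0}\in\mathbb{R}$ so that $(\lambda_{1,0},\lambda_{2,0},u_{1,0},u_{2,0})$ solves \eqref{yun1-18-1}. Replacing $u_{i,0}$ by $|u_{i,0}|$ does not increase $J$ (the $\kappa>0$, $\beta>0$ hypotheses ensure the coupling terms only improve under this replacement, since $\int\kappa u_1u_2\le\int\kappa|u_1||u_2|$), so we may take $u_{1,0},u_{2,0}\ge 0$; strong maximum principle then upgrades this to $u_{1,0},u_{2,0}>0$, and radial symmetry is automatic from the radial setting.

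It remains to prove $\lambda_{1,0},\lambda_{2,0}<0$, and this is the step I expect to be the main obstacle. Testing the first equation with $u_{1,0}$ gives $\lambda_{1,0}a_1^2 = |\nabla u_{1,0}|_2^2 - \mu_1|u_{1,0}|_{p_1}^{p_1} - r_1\beta\int|u_1|^{r_1}|u_2|^{r_2} - \int\kappa u_1u_2$, which does not obviously have a sign. The standard route is to compare $m$ with the energy of suitable test configurations—for instance, dilations $u_i^t(x)=t^{N/2}u_i(tx)$ which preserve the $L^2$-constraint—to show $m<0$, or more robustly to argue by contradiction: if, say, $\lambda_{1,0}\ge 0$, then the pair formally satisfies a linear inequality $-\Delta u_{1,0}\ge$ (nonnegative terms), and combined with $u_{1,0}\in L^2(\mathbb{R}^N)$, $u_{1,0}>0$, one invokes the Liouville-type theorems of \cite{IK-1} (for the relevant subcritical exponent ranges, which is precisely why hypotheses (a) and (b) split on $N$) to conclude $u_{1,0}\equiv 0$, contradicting $|u_{1,0}|_2=a_1>0$. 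The delicate point is checking that the system genuinely reduces to a scalar differential inequality to which \cite{IK-1} applies—here the positivity of $\kappa$ and $\beta$ and the sign of $\lambda_{2,0}$ must be handled simultaneously, possibly bootstrapping: one first shows at least one multiplier is negative, then feeds that back into the other equation. I would carry out this contradiction argument carefully, treating the two equations symmetrically, and this is where the exponent restrictions in (a)–(b) are consumed.
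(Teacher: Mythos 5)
Your outline follows the paper's general strategy (Ekeland on $S_1\times S_2$, radial compactness, Liouville theorems from \cite{IK-1} for the multiplier signs), but there is a genuine gap at the compactness step, and it propagates into a circularity. You claim that the constrained PS condition together with strong $L^q$-convergence of the nonlinear terms ``forces strong $H^1_r$-convergence, so $(u_{1,0},u_{2,0})\in S_1\times S_2$ with $J(u_{1,0},u_{2,0})=m$.'' This does not follow: the embedding $H^1_r(\mathbb{R}^N)\hookrightarrow L^2(\mathbb{R}^N)$ is \emph{not} compact, so the $L^2$-constraint can be lost in the weak limit ($|u_{i,0}|_2<a_i$ is not excluded). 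Testing the PS condition against $(u_{1,n}-u_{1,0},u_{2,n}-u_{2,0})$ only gives that the defect in $|\nabla u_{i,n}|_2^2$ and the defect $-\lambda_{i,0}(a_i^2-|u_{i,0}|_2^2)$ sum to zero; these two terms have the same sign, and hence both vanish, \emph{only if} one already knows $\lambda_{i,0}<0$. But in your proposal the negativity of the multipliers is proved afterwards, and your Liouville contradiction explicitly invokes $|u_{1,0}|_2=a_1>0$, i.e.\ exactly the constraint whose preservation required $\lambda_{i,0}<0$ in the first place. As written, the argument is circular.

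The paper resolves this by a different ordering, which you should adopt: (1) the weak limit $(u_{1,0},u_{2,0})$, with the limit multipliers $\lambda_{1,0},\lambda_{2,0}$, solves system \eqref{yun1-18-1} regardless of whether the constraint survives; (2) the case $(u_{1,0},u_{2,0})=(0,0)$ is excluded because $c=\inf_{S_1\times S_2}J\le m_{p_1}^{\mu_1}(a_1)+m_{p_2}^{\mu_2}(a_2)<0$ (comparison with the decoupled scalar problems), while compact embedding into $L^q$, $2<q<2^*$, would force $\lim J(u_{1,n},u_{2,n})\ge 0$; (3) the case where exactly one component vanishes is excluded by the \emph{strict} positivity of $\kappa$: if $u_{2,0}\equiv 0$, the equation forces $\kappa(x)u_{1,0}\equiv 0$, hence $u_{1,0}\equiv 0$; (4) with both components nonzero and nonnegative, the Liouville theorems (Lemma \ref{3-24-2}, used in Lemma \ref{le3-25-1}) give $\lambda_{1,0},\lambda_{2,0}<0$; (5) only then does one obtain strong $H^1_r$-convergence and constraint preservation via the coercive quadratic form $|\nabla u|_2^2-\lambda_{i,0}|u|_2^2$ (Lemma \ref{le3-25-2}), and finally positivity by the maximum principle. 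Note also that the strict inequality $c<0$ and the Liouville argument are not interchangeable alternatives, as your last paragraph suggests: $c<0$ is needed to rule out the trivial weak limit, the positivity of $\kappa$ to rule out a semitrivial one, and the Liouville theorems to sign the multipliers; all three are used, in that order. A minor further point: nonnegativity should be arranged along the minimizing sequence before applying Ekeland (as the paper does), rather than by replacing the limit by its modulus after the fact.
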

Similarly, when $\kappa(x)<0$ we obtain
\begin{thm}\label{th3-23-2}
Assume $\beta>0$, $\kappa(x)<0$, $\kappa(x)=\kappa(|x|)$ and $\kappa(x)\in L^{{p}}(\mathbb{R}^{N})\cap L^{{\infty}}(\mathbb{R}^{N})$, $N/2<p<\infty$. If one of the following assumptions is satisfied:

(a) $2\leq N\leq 4$ and $2<p_1$, $p_2$, $r_1+r_2<2+4/N$, or

(b) $N\geq 5$ and $2<p_1$, $p_2<2+2/(N-2)$ and $2<r_1+r_2<2+4/N$.
\par
Then \eqref{yun1-18-1}-\eqref{yun1-18-6} has a solution $(\lambda_{1,0},\lambda_{2,0},u_{1,0},u_{2,0})$ such that  $\lambda_{1,0}$, $\lambda_{2,0}<0$ and $u_{1,0}>0$, $u_{2,0}<0$ or $u_{1,0}<0$, $u_{2,0}>0$. Moreover, $ u_{1,0}$ and $ u_{2,0}$ are radially symmetric.
\end{thm}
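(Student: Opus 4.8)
The plan is to mirror the proof of Theorem \ref{th3-23-1}, since the only essential change is the sign of $\kappa(x)$, which affects the sign of the coupling term $-\int_{\mathbb{R}^N}\kappa(x)u_1u_2$ in $J$. First I would set up the minimization of $J$ on $S_1\times S_2$. Under assumption (a) or (b) the exponents $p_1,p_2,r_1+r_2$ are $L^2$-subcritical (with the extra restriction in (b) so that $p_i<2+2/(N-2)$ keeps the argument with the Liouville-type theorems of \cite{IK-1} applicable), so by the Gagliardo--Nirenberg inequality each of $|u_i|_{p_i}^{p_i}$ and $\int|u_1|^{r_1}|u_2|^{r_2}$ is controlled by a sublinear power of $|\nabla u_i|_2^2$ on $S_1\times S_2$; together with $|\int\kappa(x)u_1u_2|\le |\kappa|_\infty a_1 a_2$ (or the $L^p$ bound via H\"older and Sobolev), this shows $J|_{S_1\times S_2}$ is coercive and bounded below. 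Hence $m:=\inf_{S_1\times S_2}J>-\infty$.

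Next I would run the Ekeland variational principle on the $C^2$ Finsler manifold $S_1\times S_2$ to produce a Palais--Smale sequence $(u_{1,n},u_{2,n})$ for $J|_{S_1\times S_2}$ at level $m$: that is, $J(u_{1,n},u_{2,n})\to m$ and $\|J|_{S_1\times S_2}'(u_{1,n},u_{2,n})\|\to 0$, which via Lagrange multipliers yields sequences $\lambda_{1,n},\lambda_{2,n}$ such that the equations \eqref{yun1-18-1} hold up to a vanishing error. Coercivity gives that $(u_{1,n},u_{2,n})$ is bounded in $H^1_r\times H^1_r$, and testing the equations against $u_{i,n}$ shows $\lambda_{i,n}$ is bounded. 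Using the \emph{compact} embedding $H^1_r(\mathbb{R}^N)\hookrightarrow L^q(\mathbb{R}^N)$ for $2<q<2^*$ (this is exactly why we restricted to the radial subspace), I would extract $(u_{1,n},u_{2,n})\rightharpoonup(u_{1,0},u_{2,0})$ weakly in $H^1_r\times H^1_r$ and strongly in $L^{p_i}$ and in $L^{r_1+r_2}$, pass to the limit in the equations, and obtain a weak solution $(\lambda_{1,0},\lambda_{2,0},u_{1,0},u_{2,0})$; a standard Brezis--Lieb/lower-semicontinuity argument together with the subcriticality upgrades the convergence to strong convergence in $H^1_r\times H^1_r$, so $J(u_{1,0},u_{2,0})=m$ and the constraints $|u_{i,0}|_2=a_i$ are preserved, in particular $u_{i,0}\neq 0$.

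It remains to fix the signs. Replacing $u_{i,0}$ by $|u_{i,0}|$ does not increase $J$ when $\beta>0$ because $\int|\nabla|u||^2\le\int|\nabla u|^2$ and the nonlinear terms depend only on $|u_i|$; the linear coupling term changes as $-\int\kappa(x)|u_{1,0}||u_{2,0}|\le -\int\kappa(x)u_{1,0}u_{2,0}$ precisely when $\kappa(x)<0$ (note in Theorem \ref{th3-23-1} with $\kappa>0$ one instead makes $u_1,u_2$ \emph{opposite-signed} in a preliminary step, here it is the reverse). So the minimizer can be taken with $u_{1,0}\ge 0$, $u_{2,0}\le 0$ (or vice versa). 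Then the strong maximum principle applied to each equation — using that $\mu_i|u_{i,0}|^{p_i-2}u_{i,0}$ and the $\beta$-term have the right sign, and that $\kappa(x)<0$ times the opposite-signed partner contributes a nonnegative zeroth-order perturbation — forces $u_{1,0}>0$, $u_{2,0}<0$ (or vice versa) everywhere. Finally, testing the first equation against $u_{1,0}$ and combining with the Pohozaev-type identity (or directly using the Liouville-type theorems of \cite{IK-1} exactly as in the proof of Theorem \ref{th3-23-1}, which is where hypothesis (b)'s bound $p_i<2+2/(N-2)$ is needed) rules out $\lambda_{i,0}\ge 0$ and yields $\lambda_{1,0},\lambda_{2,0}<0$. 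Radial symmetry is automatic since we worked in $H^1_r$.

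I expect the main obstacle to be exactly the step that distinguishes this from the single-equation case: controlling the mixed coupling terms when passing to the weak limit and, in particular, proving strong $H^1$ convergence of the Palais--Smale sequence despite the linear coupling $\int\kappa(x)u_1u_2$, which is not weakly continuous a priori — here one leans on $\kappa\in L^p$ with $N/2<p<\infty$ and the compact radial embedding to recover it. The sign analysis is then routine given the $\kappa<0$ hypothesis, and the exclusion of nonnegative Lagrange multipliers is handled verbatim as in Theorem \ref{th3-23-1} via \cite{IK-1}.
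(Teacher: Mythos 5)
Your overall strategy (Ekeland on $S_1\times S_2$, compact radial embedding, Liouville theorems for the multipliers, opposite-signed components) is the paper's, but there is a genuine gap in the compactness step, and its logical order is inverted. You claim that "a standard Brezis--Lieb/lower-semicontinuity argument together with the subcriticality upgrades the convergence to strong convergence in $H^1_r\times H^1_r$, so \dots the constraints $|u_{i,0}|_2=a_i$ are preserved, in particular $u_{i,0}\neq 0$." This does not work as stated: $H^1_r(\mathbb{R}^N)$ embeds compactly into $L^q$ only for $2<q<2^*$, not for $q=2$, so a bounded Palais--Smale sequence can lose $L^2$ mass (the weak limit may fail the constraint, or be zero, or be semitrivial), and no Brezis--Lieb argument rules this out. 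In the paper the implications run the other way: first one shows the level is strictly negative, $c\leq\inf\bar J\leq m_{p_1}^{\mu_1}(a_1)+m_{p_2}^{\mu_2}(a_2)<0$, by comparing with the uncoupled functional $\bar J$ along opposite-signed test pairs (this excludes $(u_{1,0},u_{2,0})=(0,0)$); next the semitrivial cases are excluded because if, say, $u_{2,0}\equiv 0$, then the second equation of \eqref{yun1-18-1} forces $\kappa(x)u_{1,0}\equiv 0$, hence $u_{1,0}\equiv 0$ since $\kappa\not\equiv 0$; then Lemma \ref{le3-25-3} (via the Liouville results of Lemma \ref{3-24-2}, which is where hypothesis (b) enters) gives $\lambda_{1,0},\lambda_{2,0}<0$; and only after that does Lemma \ref{le3-25-4} yield strong $H^1_r$ convergence, because $|\nabla u|_2^2-\lambda_{i,0}|u|_2^2$ is an equivalent norm when $\lambda_{i,0}<0$, and this is precisely what recovers $|u_{i,0}|_2=a_i$. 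Your proposal contains neither the $c<0$ estimate nor the exclusion of semitrivial limits, and it uses strong convergence to get nontriviality while the correct argument needs nontriviality (and the resulting $\lambda_{i,0}<0$) to get strong convergence; as written it is circular.

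A secondary point: your sign manipulation is backwards. For $\kappa<0$ the inequality $-\int\kappa|u_1||u_2|\leq-\int\kappa u_1u_2$ is false (it holds for $\kappa>0$); with $\kappa<0$ the coupling term decreases when the components are replaced by opposite-signed representatives such as $(|u_1|,-|u_2|)$, and in Theorem \ref{th3-23-1} (where $\kappa>0$) one takes both components nonnegative, not opposite-signed as your parenthetical asserts. Your final configuration $u_{1,0}\geq 0$, $u_{2,0}\leq 0$ is the right one, but the justification should be fixed; the paper implements this choice already at the level of the minimizing sequence ($v_{1,n}\leq 0$, $v_{2,n}\geq 0$) before applying Ekeland, which is also what makes the comparison $J\leq\bar J$, and hence $c<0$, available.
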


In section 3, we consider \eqref{yun1-18-1}-\eqref{yun1-18-6} when $N=3$, $p_1=p_2=4$, and $r_1=r_2=2$ which is $L^2-$ supercritical case.
\begin{equation}\label{yun1-18-5}
\begin{cases}
-\Delta u_1-\lambda_1 u_1=\mu_1 u_1^3+\beta u_1u_2^2+\kappa (x)u_2\ \textup {in}\ \mathbb{R}^3,\\
-\Delta u_2-\lambda_2 u_2=\mu_2 u_2^3+\beta u_1^2 u_2+\kappa (x)u_1\ \textup {in}\ \mathbb{R}^3,\\  u_1\in H^1(\mathbb{R}^3),u_2\in H^1(\mathbb{R}^3),
\end{cases}
\end{equation}
with the condition
\begin{equation}\label{yun1-18-2}
\int_{\mathbb{R}^3} u_1^2=a_1^2, \int_{\mathbb{R}^3} u_2^2=a_2^2,
\end{equation}
where $a_1$, $a_2$, $\mu_1$, $\mu_2>0$, $\beta\in \mathbb{R}$ and $\kappa(x)\in L^\infty(\mathbb{R}^3)$, \eqref{yun1-18-5}-$\eqref{yun1-18-2}$ is the classical Bose-Einstein-condensates model.

 By constructing the mountain pass structure on manifold $S_1\times S_2$, we have
\begin{thm}\label{zh3-23}

Assume $\beta>0$, $\kappa(x)>0$, $\kappa(x)=\kappa(|x|)$, $\kappa(x)\in L^p(\mathbb{R}^3)\cap L^{\infty}(\mathbb{R}^3)$ for some $\frac{3}{2}<p<\infty$, $\frac{2}{3}\nabla \kappa(x)\cdot x+\kappa(x)\geq 0$, $\nabla \kappa(x)\cdot x$ is bounded and
$$
|\kappa(x)|_{\infty}<\frac{5}{18C_{a_1,a_2}^2a_1a_2},
$$
 where $C_{a_1,a_2}=((\mu_1+\beta)a_1S^4+(\mu_2+\beta)a_2S^4)$ and $S$ denotes the  Sobolev embedding constant in $\mathbb{R}^3$. Then \eqref{yun1-18-5}-\eqref{yun1-18-2} has a solution $(\bar\lambda_1,\bar\lambda_2,\bar u_1,\bar u_2)$ such that $\bar\lambda_1<0$, $\bar\lambda_2<0$, $\bar u_1>0$, $\bar u_2>0$. Moreover, $\bar u_1$ and $\bar u_2$ are radially symmetric.
\end{thm}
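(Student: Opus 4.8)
The plan is to produce $(\bar u_1,\bar u_2)$ as a mountain--pass critical point of $J$ constrained to the Finsler manifold $\mathcal M:=S_1\times S_2$, using the minimax theory for Finsler manifolds of \cite{GH-1}, and to read off $\bar\lambda_1,\bar\lambda_2$ from the Euler--Lagrange equations. Since $\kappa\ge0$ one has $J(|u_1|,|u_2|)\le J(u_1,u_2)$, so the whole construction may be run on paths of non-negative functions (the minimax value is unchanged), and Palais' principle of symmetric criticality lets us work throughout in $H^1_r\times H^1_r$; the solution will therefore be non-negative and radial.

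First I set up the geometry via the $L^2$-preserving dilation $u_t(x):=t^{3/2}u(tx)$. A direct computation gives
\[ J(u_{1,t},u_{2,t})=\tfrac{t^{2}}{2}\big(|\nabla u_1|_2^{2}+|\nabla u_2|_2^{2}\big)-\tfrac{t^{3}}{4}\big(\mu_1|u_1|_4^{4}+\mu_2|u_2|_4^{4}\big)-\beta t^{3}\int_{\mathbb{R}^3}u_1^{2}u_2^{2}-\int_{\mathbb{R}^3}\kappa(x/t)\,u_1u_2, \]
so every nonlinear term scales like $t^{3}$ while the kinetic part scales like $t^{2}$; write $\Psi(u_1,u_2)$ for the derivative of the right-hand side at $t=1$ (the Pohozaev functional). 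By Gagliardo--Nirenberg in $\mathbb{R}^3$, $|u_i|_4^{4}\le C a_i|\nabla u_i|_2^{3}$ and $\int u_1^{2}u_2^{2}\le C\sqrt{a_1a_2}\,(|\nabla u_1|_2^{2}+|\nabla u_2|_2^{2})^{3/2}$, together with $|\int\kappa u_1u_2|\le|\kappa|_\infty a_1a_2$; hence on the ``wall'' $\{|\nabla u_1|_2^{2}+|\nabla u_2|_2^{2}=\rho^{2}\}$ one has $J\ge\tfrac12\rho^{2}-C\rho^{3}-|\kappa|_\infty a_1a_2$, and optimizing in $\rho$ this is a strictly positive constant $\varepsilon_0$ \emph{exactly because} $|\kappa|_\infty<\tfrac{5}{18C_{a_1,a_2}^{2}a_1a_2}$. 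On the other hand $J(u_{1,t},u_{2,t})\to-\infty$ as $t\to+\infty$, while $J(u_{1,t},u_{2,t})\to0$ as $t\to0^{+}$ (here $|\kappa(\cdot/t)|_p=t^{3/p}|\kappa|_p\to0$ is used); choosing endpoints on opposite sides of the wall produces a bona fide mountain--pass value $c\ge\varepsilon_0>0$ lying strictly above the endpoint values.

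Next, to force the Pohozaev relation asymptotically I apply the minimax principle not to $J$ directly but to the augmented functional $\widetilde J(s,u_1,u_2):=J((u_1)_{e^{s}},(u_2)_{e^{s}})$ on $\mathbb{R}\times\mathcal M$ (Jeanjean's device); unscaling the resulting Palais--Smale sequence yields $(v_{1,n},v_{2,n})\subset\mathcal M$ with $J(v_{1,n},v_{2,n})\to c$, $\|dJ|_{\mathcal M}(v_{1,n},v_{2,n})\|\to0$ and, in addition, $\Psi(v_{1,n},v_{2,n})\to0$. From
\[ c+o(1)=\Big(J-\tfrac13\Psi\Big)(v_{1,n},v_{2,n})=\tfrac16\big(|\nabla v_{1,n}|_2^{2}+|\nabla v_{2,n}|_2^{2}\big)-\int_{\mathbb{R}^3}\Big(\kappa+\tfrac13\nabla\kappa\cdot x\Big)v_{1,n}v_{2,n} \]
and the boundedness of $\kappa+\tfrac13\nabla\kappa\cdot x$ (this is where $\kappa\in L^{\infty}$ and $\nabla\kappa\cdot x$ bounded enter), the gradient norms are bounded, and with $|v_{i,n}|_2=a_i$ the sequence is bounded in $H^1_r\times H^1_r$. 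By the compact embedding $H^1_r\hookrightarrow L^{q}(\mathbb{R}^3)$ for $2<q<6$, up to a subsequence $v_{i,n}\rightharpoonup v_i$ in $H^1_r$ and $v_{i,n}\to v_i$ in every such $L^{q}$; testing the constrained equation with $v_{i,n}$ shows the multipliers $\lambda_{i,n}$ are bounded, so $\lambda_{i,n}\to\lambda_i$, and passing to the limit in the weak formulation of \eqref{yun1-18-5} shows $(v_1,v_2)$ is a non-negative radial weak solution with multipliers $\lambda_1,\lambda_2$. It cannot happen that $v_1\equiv v_2\equiv 0$: then all nonlinear and coupling integrals vanish, $\Psi(v_n)=|\nabla v_n|_2^{2}+o(1)\to0$, and $J(v_n)\to0\ne c$; and exactly one cannot vanish either, since the corresponding limiting equation would read $\kappa v_j\equiv 0$ with $\kappa>0$. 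Hence $v_1,v_2\not\equiv 0$.

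Finally, if $\lambda_i\ge0$ then, since the right-hand side of the $i$-th equation is $\ge0$, $v_i$ would be a non-trivial non-negative superharmonic function in $L^{2}(\mathbb{R}^3)$ --- impossible (a positive superharmonic function on $\mathbb{R}^3$ decays no faster than $|x|^{-1}$, hence is not square-integrable; alternatively invoke the Liouville-type results of \cite{IK-1}). Thus $\lambda_1,\lambda_2<0$. Testing the limiting equations and the asymptotic constrained equations against $v_i$, subtracting, and using $\lambda_i<0$ together with weak lower semicontinuity of $|\cdot|_2$ and $|\nabla\cdot|_2$ gives $|v_{i,n}|_2\to|v_i|_2$ --- so $|v_i|_2=a_i$, $(v_1,v_2)\in\mathcal M$ --- and $|\nabla v_{i,n}|_2\to|\nabla v_i|_2$, i.e.\ $v_{i,n}\to v_i$ strongly in $H^1_r$; therefore $J(v_1,v_2)=c$ and $(v_1,v_2)$ is a genuine critical point of $J|_{\mathcal M}$. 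Strict positivity $v_i>0$ then follows from the strong maximum principle, the solution is classical by elliptic regularity, and $(\bar\lambda_1,\bar\lambda_2,\bar u_1,\bar u_2):=(\lambda_1,\lambda_2,v_1,v_2)$ is as claimed. The main obstacle is precisely this compactness step: because of the dilation invariance the Palais--Smale sequence could spread out and lose mass in the limit, and it is the asymptotic Pohozaev identity (manufactured via $\widetilde J$) combined with $\lambda_i<0$ that prevents this; the quantitative smallness $|\kappa|_\infty<\tfrac{5}{18C_{a_1,a_2}^{2}a_1a_2}$ is exactly what makes the wall positive, while the sign condition $\tfrac23\nabla\kappa\cdot x+\kappa\ge0$ and the boundedness of $\nabla\kappa\cdot x$ are what keep the fibering maps $t\mapsto J(u_t)$ and the combination $J-\tfrac13\Psi$ under control.
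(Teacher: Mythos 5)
Your overall scheme is the same as the paper's: mountain--pass geometry on $S_1\times S_2$ built from Gagliardo--Nirenberg and the smallness of $|\kappa|_\infty$, the augmented functional $\widetilde J(s,u_1,u_2)=J(s\star u_1,s\star u_2)$ to produce a Palais--Smale sequence carrying the asymptotic Pohozaev information, boundedness from the combination $J-\tfrac13\Psi$ using the boundedness of $\kappa$ and $\nabla\kappa\cdot x$, compactness of $H^1_r\hookrightarrow L^q$, the Liouville results of \cite{IK-1} to force negative multipliers, and strong convergence from $\lambda_i<0$. Where you diverge is the endgame: the paper first proves a lower bound $|\nabla u_{1,n}|_2^2+|\nabla u_{2,n}|_2^2\geq 3c$ (using $\tfrac13\nabla\kappa\cdot x+\kappa\geq0$) and then the estimate $\bar\lambda_1a_1^2+\bar\lambda_2a_2^2\leq-\tfrac13\cdot 3c<0$ (using $\tfrac23\nabla\kappa\cdot x+\kappa\geq0$), so that \emph{at least one} multiplier is negative; strong convergence of that component then gives a nontrivial limit with full mass, and the linear coupling $\kappa u_1$ plus Liouville forces the other multiplier to be negative. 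You instead try to show \emph{directly} that both components of the weak limit are nontrivial and only then invoke Liouville for both equations.

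That is where your argument has a genuine gap: in ruling out $(v_1,v_2)=(0,0)$ you assert that ``all nonlinear and coupling integrals vanish,'' so that $\Psi(v_n)=|\nabla v_{1,n}|_2^2+|\nabla v_{2,n}|_2^2+o(1)$. But $\Psi$ contains the term $\int_{\mathbb{R}^3}\nabla\kappa(x)\cdot x\,v_{1,n}v_{2,n}$, and the only hypothesis on $\nabla\kappa\cdot x$ is boundedness; since $v_{i,n}\rightharpoonup0$ gives $v_{1,n}v_{2,n}\to0$ in $L^r$ only for $r>1$ (the $L^1$ endpoint is exactly what is missed by the compact embedding, as $|v_{i,n}|_2=a_i$ does not decay), this integral need not tend to zero --- a priori it could converge to $-2c$, which is perfectly consistent with $\Psi(v_n)\to0$, $J(v_n)\to c$ and $|\nabla v_{1,n}|_2^2+|\nabla v_{2,n}|_2^2\to2c$, so no contradiction follows as written. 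Tellingly, your proof never actually uses the hypothesis $\tfrac23\nabla\kappa\cdot x+\kappa\geq0$, which is precisely the tool the paper deploys at this point: since $\kappa>0$ it implies $\nabla\kappa\cdot x\geq-\tfrac32\kappa\geq-3\kappa$, and because the negative parts of the sequence vanish and $\int\kappa\,v_{1,n}v_{2,n}\to0$ (here $\kappa\in L^p$, $p>\tfrac32$, is what gives compactness of this term), one gets $\liminf\int\nabla\kappa\cdot x\,v_{1,n}v_{2,n}\geq0$, which restores your contradiction with $c>0$. So the gap is repairable with the stated hypotheses, but as written the key step of your compactness argument is unjustified, and the repair is exactly the sign condition your sketch leaves unused.
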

\begin{remark}
From Theorem \ref{zh3-23} we have that when $a_1$ or $a_2$ is small, then the upper bound of $\kappa(x)$ can be large. %For example we can take $\kappa(x)=O(1/{|x|^p})$ as $|x|\rightarrow \infty$ where $0<p\leq 3/2$.
\end{remark}
\begin{remark}
For example we can take $\kappa(x)=\frac{c}{1+|x|^{\frac{3}{2}}}$ where $c<\frac{5}{18C_{a_1,a_2}^2a_1a_2}$, which satisfies all conditions in Theorem \ref{zh3-23}.
\end{remark}
At the end of section 3, we will relax some restrictions of $\kappa(x)$ to obtain the same conclusion as Theorem \ref{zh3-23}.

\section{$L^2$-subcritical case}
 In this section, we prove Theorem \ref{th3-23-1} and Theorem \ref{th3-23-2}.\par
 We assume $\kappa(x)$ satisfies $\kappa(x)\in L^{\infty}(\mathbb{R}^N)$, $\kappa(x)> 0$ or $\kappa(x)< 0$, $2<p_1<2+{4}/{N}$, $2<p_2<2+{4}/{N}$, $2<r_1+r_2<2+{4}/{N}$, $\beta>0$, we work on the space $H^1_{r}\times H^1_{r}$,
  the corresponding energy functional of \eqref{yun1-18-1}-\eqref{yun1-18-6} on $S_1\times S_2$ is
\begin{equation}
\begin{split}
J(u_1,u_2)=&\frac{1}{2}(\int_{\mathbb{R}^N} |\nabla u_1|^2+\int_{\mathbb{R}^N} |\nabla u_2|^2)-\frac{\mu_1}{p_1}|u_1|_{p_1}^{p_1}-\frac{\mu_2}{p_2}|u_2|_{p_2}^{p_2}\\
&-\beta\int_{\mathbb{R}^N} |u_1|^{r_1}|u_2|^{r_2}-\int_{\mathbb{R}^N} \kappa(x) u_1u_2.\nonumber
\end{split}
\end{equation}
We try to find the critical point of $J$ on $S_1\times S_2$.
\begin{lemma}[Gagliardo-Nirenberg inequality]\label{3-23-1}
For any $u\in H^1(\mathbb{R}^N)$ we have
$$
|u|_p\leq C_{N,p} |\nabla u|^{\alpha}_2|u|^{1-\alpha}_2,
$$
where $\alpha=\frac{N(p-2)}{2p}$.
\end{lemma}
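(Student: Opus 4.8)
The plan is to deduce Lemma~\ref{3-23-1} from the (non scale-invariant) Sobolev embedding by optimizing over dilations; this argument works uniformly in $N\ge 2$. We may assume $u\not\equiv 0$, hence $|u|_2>0$ and $|\nabla u|_2>0$. The endpoint cases are immediate: $p=2$ is trivial, and when $N\ge 3$ the case $p=2^*$ is exactly the Sobolev inequality $|u|_{2^*}\le S|\nabla u|_2$ (for which $\alpha=1$, since $2^*-2=4/(N-2)$ gives $\alpha=1$). So we assume $p$ lies strictly between $2$ and $2^*$.

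First I would record the plain embedding $H^1(\mathbb{R}^N)\hookrightarrow L^p(\mathbb{R}^N)$ in the form $|u|_p\le K\big(|\nabla u|_2^2+|u|_2^2\big)^{1/2}$, valid for $2\le p\le 2^*$ when $N\ge 3$ and $2\le p<\infty$ when $N=2$; this is standard (it follows, for instance, from the Gagliardo--Nirenberg--Sobolev inequality or from a truncation argument). Next, apply this estimate to the dilated function $u_\lambda(x):=u(\lambda x)$, $\lambda>0$. Since $|u_\lambda|_p=\lambda^{-N/p}|u|_p$, $|\nabla u_\lambda|_2=\lambda^{1-N/2}|\nabla u|_2$ and $|u_\lambda|_2=\lambda^{-N/2}|u|_2$, multiplying through by $\lambda^{N/p}$ yields
$$|u|_p\le K\Big(\lambda^{a}\,|\nabla u|_2^2+\lambda^{a-2}\,|u|_2^2\Big)^{1/2},\qquad a:=2+\tfrac{2N}{p}-N,$$
for every $\lambda>0$.

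It then remains to minimize the right-hand side over $\lambda>0$. Because $2<p<2^*$, the exponents satisfy $a-2<0<a$, so $\lambda\mapsto \lambda^{a}|\nabla u|_2^2+\lambda^{a-2}|u|_2^2$ blows up as $\lambda\to 0^+$ and as $\lambda\to\infty$ and attains its minimum at the interior critical point $\lambda_\ast^2=\frac{(2-a)\,|u|_2^2}{a\,|\nabla u|_2^2}$; substituting $\lambda_\ast$ and observing that the two resulting terms carry the same powers of $|\nabla u|_2$ and $|u|_2$ gives the minimum value $\asymp\big(|u|_2^2\big)^{a/2}\big(|\nabla u|_2^2\big)^{1-a/2}$, the implied constant affecting only $C_{N,p}$. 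Finally one checks the arithmetic identities $1-\tfrac a2=\tfrac{N(p-2)}{2p}=\alpha$ and $\tfrac a2=1-\alpha$, which produces $|u|_p\le C_{N,p}\,|\nabla u|_2^{\alpha}|u|_2^{1-\alpha}$. I do not expect a genuine obstacle: the only slightly delicate points are the exponent bookkeeping in the last step and the separate handling of the endpoints $p=2$ and (for $N\ge 3$) $p=2^*$, where one uses the pure-gradient Sobolev inequality; the sharp value of $C_{N,p}$ is not needed for our applications and is not pursued.
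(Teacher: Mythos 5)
Your argument is correct. Note that the paper itself gives no proof of this lemma: it is stated as the classical Gagliardo--Nirenberg interpolation inequality and used as a known fact, so there is no in-paper argument to compare against. Your derivation --- take the non-scale-invariant embedding $|u|_p\le K(|\nabla u|_2^2+|u|_2^2)^{1/2}$, apply it to $u_\lambda(x)=u(\lambda x)$, and minimize over $\lambda>0$ --- is the standard scaling proof and all the bookkeeping checks out: with $a=2+\tfrac{2N}{p}-N$ one indeed has $a>0$ exactly when $p<2^*$ (trivially for $N=2$) and $a-2<0$ exactly when $p>2$, so the interior minimum exists, both terms at $\lambda_\ast$ scale as $(|u|_2^2)^{a/2}(|\nabla u|_2^2)^{1-a/2}$, and $1-\tfrac a2=\tfrac{N(p-2)}{2p}=\alpha$. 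The endpoint remarks ($p=2$ trivial, $p=2^*$ for $N\ge3$ being the pure Sobolev inequality, which is anyway not needed since the paper only uses $2<p<2^*$) and the observation that $u\not\equiv0$ in $H^1(\mathbb{R}^N)$ forces $|\nabla u|_2>0$ (no nonzero constant lies in $L^2(\mathbb{R}^N)$) are also fine. Since the sharp constant plays no role in the paper's applications, nothing further is required.
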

\begin{lemma}\label{3-24-1}
$J(u_1,u_2)$ is coercive and bounded from below on $S_1\times S_2$.
\end{lemma}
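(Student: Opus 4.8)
The plan is to estimate each of the negative terms in $J$ using the Gagliardo--Nirenberg inequality (Lemma \ref{3-23-1}) together with the mass constraints $|u_i|_2 = a_i$, so that on $S_1\times S_2$ the functional is bounded below by the kinetic energy minus lower-order powers of $|\nabla u_1|_2$ and $|\nabla u_2|_2$. First I would treat the pure power terms: by Lemma \ref{3-23-1},
$$
\frac{\mu_i}{p_i}|u_i|_{p_i}^{p_i} \leq \frac{\mu_i}{p_i} C_{N,p_i}^{p_i} a_i^{(1-\alpha_i)p_i}\,|\nabla u_i|_2^{\alpha_i p_i},
\qquad \alpha_i = \frac{N(p_i-2)}{2p_i},
$$
and the exponent satisfies $\alpha_i p_i = \frac{N(p_i-2)}{2} < 2$ precisely because $p_i < 2 + 4/N$ ($L^2$-subcritical). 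Next I would handle the coupling term $\beta\int |u_1|^{r_1}|u_2|^{r_2}$: applying Hölder with exponents $\frac{r_1+r_2}{r_1}$ and $\frac{r_1+r_2}{r_2}$ gives $\int |u_1|^{r_1}|u_2|^{r_2} \leq |u_1|_{r_1+r_2}^{r_1}\,|u_2|_{r_1+r_2}^{r_2}$, and then Lemma \ref{3-23-1} with $p = r_1+r_2$ bounds this by a constant times $|\nabla u_1|_2^{\alpha r_1}|\nabla u_2|_2^{\alpha r_2}$ with $\alpha = \frac{N(r_1+r_2-2)}{2(r_1+r_2)}$; here $\alpha(r_1+r_2) = \frac{N(r_1+r_2-2)}{2} < 2$ since $r_1+r_2 < 2+4/N$, and by Young's inequality $|\nabla u_1|_2^{\alpha r_1}|\nabla u_2|_2^{\alpha r_2} \leq |\nabla u_1|_2^{\alpha(r_1+r_2)} + |\nabla u_2|_2^{\alpha(r_1+r_2)}$, again with subquadratic total exponent. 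Finally, the linear coupling is the easiest: $\left|\int \kappa(x) u_1 u_2\right| \leq |\kappa|_\infty |u_1|_2 |u_2|_2 = |\kappa|_\infty a_1 a_2$, a fixed constant independent of the gradients.

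Collecting these estimates, writing $t_i := |\nabla u_i|_2 \geq 0$, I obtain
$$
J(u_1,u_2) \geq \frac12 (t_1^2 + t_2^2) - C_1 t_1^{\theta_1} - C_2 t_2^{\theta_2} - C_3(t_1^{\theta_3} + t_2^{\theta_3}) - C_4,
$$
where all exponents $\theta_1,\theta_2,\theta_3 \in (0,2)$ and $C_1,\dots,C_4 \geq 0$ depend only on $N$, $p_i$, $r_i$, $\mu_i$, $\beta$, $a_i$, $|\kappa|_\infty$. Since the quadratic term dominates every subquadratic term as $t_i \to \infty$, the right-hand side is a continuous function of $(t_1,t_2) \in [0,\infty)^2$ that tends to $+\infty$ as $t_1^2 + t_2^2 \to \infty$; hence it attains a finite minimum. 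This simultaneously shows $J$ is bounded from below on $S_1\times S_2$ and that $J(u_1,u_2) \to +\infty$ whenever $|\nabla u_1|_2^2 + |\nabla u_2|_2^2 \to \infty$ with $(u_1,u_2)\in S_1\times S_2$, which is the coercivity statement (coercivity here meaning with respect to the $H^1_r \times H^1_r$ norm, noting that on $S_1\times S_2$ the $L^2$-norms are fixed so the norm is controlled by the gradient norms alone).

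There is no real obstacle here; the only point requiring a little care is bookkeeping the exponents to confirm each is strictly less than $2$ — this is exactly where the $L^2$-subcritical hypotheses $p_1, p_2, r_1+r_2 < 2 + 4/N$ enter, and it is worth stating explicitly that if any exponent equalled $2$ (the $L^2$-critical threshold) the argument would break down unless the corresponding constant were small. I would also remark that the sign hypothesis $\beta > 0$ is not needed for this lemma — only $\beta \in \mathbb{R}$ — since we bound the coupling term by its absolute value; $\beta > 0$ and the sign of $\kappa$ will matter later for positivity of the minimizer, not for boundedness below.
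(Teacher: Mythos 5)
Your proposal is correct and follows essentially the same route as the paper: Gagliardo--Nirenberg for the pure power terms, H\"older plus Gagliardo--Nirenberg plus Young for the coupling term $\beta\int|u_1|^{r_1}|u_2|^{r_2}$ (your explicit choice of exponents $\frac{r_1+r_2}{r_1},\frac{r_1+r_2}{r_2}$ is just a concrete instance of the paper's pair $q,q'$), and the trivial $|\kappa|_\infty a_1a_2$ bound for the linear coupling, with all gradient exponents strictly below $2$ by the $L^2$-subcritical assumptions. Your added remarks (that $\beta>0$ is not needed here, and that coercivity on $S_1\times S_2$ reduces to the gradient norms) are accurate and consistent with the paper's argument.
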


\begin{proof}
By lemma 2.1 we have
$$\int_{\mathbb{R}^N} |u_i|^{p_i}\leq C(N,p_i,a_i)|\nabla u_i|_2^{\frac{N(p_i-2)}{2}}\ , \ i=1,2.$$
Set $C_1= C(N,p_1,a_1)$, $C_2= C(N,p_2,a_2)$ and we can find $q,q'$ such that $2\leq r_1q$, $r_2q'\leq 2^{*}$, $1/q+1/q'=1$, when $2\leq r_1+r_2\leq 2^*$ we obtain
$$\int_{\mathbb{R}^N} |u_1|^{r_1} |u_2|^{r_2}\leq |u_1|_{r_1q}^{r_1}|u_2|_{r_2q'}^{r_2}\leq C|\nabla u_1|_2^{\frac{N(r_1q-2)}{2q}}|\nabla u_2|_2^{\frac{N(r_2q'-2)}{2q'}},$$
where $C_3=C_3(N,a_i,r_i,q,q')$. By direct computation, when $r_1+r_2<2+4/N$ we get that
$$\frac{N(r_1q-2)}{2q}+\frac{N(r_2q'-2)}{2q'}<2.$$
By Young inequality we can find $\gamma_1,\gamma_2<2$ such that
$$\int_{\mathbb{R}^N} |u_1|^{r_1} |u_2|^{r_2}\leq C_3(|\nabla u_1|^{\gamma_1}+|\nabla u_2|^{\gamma_2}).$$
By H\"{o}lder inequality, we get
$$\int_{\mathbb{R}^N} \kappa(x) u_1 u_2\leq |\kappa(x)|_{\infty} a_1a_2=:C_4.$$
Thus we have for any $(u_1,u_2)\in S_1\times S_2,$
\begin{equation}
\begin{split}
J(u_1,u_2)\geq& \frac{1}{2}(|\nabla u_1|_2^2+|\nabla u_2|_2^2)-\frac{C_1\mu_1}{p_1}|\nabla u_1|_2^{\frac{N(p_1-2)}{2}}\\
&-\frac{C_2\mu_2}{p_2}|\nabla u_2|_2^{\frac{N(p_2-2)}{2}}-C_3\beta(|\nabla u_1|_2^{\gamma_1}+|\nabla u_2|_2^{\gamma_2})-C_4,\nonumber
\end{split}
\end{equation} so $J(u_1,u_2)$ is coercive and bounded from below on $S_1\times S_2$.
\end{proof}

We need some Liouville type theorems to ensure the weak limit of a PS sequence is not zero.  If we assume $\kappa(x)\geq 0$ we can choose the minimizing sequence of $J$ on $S_1\times S_2$ is nonnegative.
%The following Liouville type theorem can be found in \cite{IK-1}.
\begin{lemma}[See \cite{IK-1}]\label{3-24-2}Assume that $u$ is a smooth function in $\mathbb{R}^N$,

(a) Suppose that $q\in (1,{N}/{(N-2)}]$ when $N\geq 3$ and $q\in (1,\infty)$ when $N=1,2$. Let $u\in L^q(\mathbb{R}^N)$ be a smooth nonnegative function satisfying $-\Delta u\geq 0$ in $\mathbb{R}^N$, then $u\equiv 0$.

(b) Suppose that $q\in (1, 1+{2}/{(N-2)}]$ the inequality $-\Delta u\geq u^q$ does not have a positive classical solution in $\mathbb{R}^N$.
\end{lemma}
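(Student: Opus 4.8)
The plan is to prove both parts by the rescaled test-function (Mitidieri--Pohozaev capacity) method, the only genuinely delicate point being the treatment of the critical exponent $q=N/(N-2)$. Fix once and for all a radially nonincreasing cutoff $\xi\in C_c^\infty(\mathbb{R}^N)$ with $\xi\equiv 1$ on $B_1$, $\operatorname{supp}\xi\subset B_2$, $0\le\xi\le 1$, and set $\xi_R(x):=\xi(x/R)$ and $A_R:=B_{2R}\setminus B_R$; note $|\Delta(\xi_R^m)|\le CR^{-2}\xi_R^{m-2}$ for $m\ge 2$, supported in $A_R$. For part (a), multiply $-\Delta u\ge 0$ by $\xi_R^m$ and integrate by parts twice (no boundary terms, since $\xi_R^m$ is compactly supported): $0\le\int(-\Delta u)\xi_R^m=-\int u\,\Delta(\xi_R^m)\le CR^{-2}\int_{A_R}u\le CR^{-2+N(1-1/q)}\|u\|_{L^q(A_R)}$ by H\"older. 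The exponent $-2+N(1-1/q)$ is $\le 0$ precisely when $q\le N/(N-2)$ (and it is strictly negative for every admissible $q$ when $N\le 2$). Hence the right-hand side stays bounded while $\|u\|_{L^q(A_R)}\to 0$ as $R\to\infty$ since $u\in L^q(\mathbb{R}^N)$; as $\xi_R^m\nearrow 1$ pointwise, monotone convergence gives $\int_{\mathbb{R}^N}(-\Delta u)=0$, so $-\Delta u\equiv 0$, i.e.\ $u$ is harmonic. A harmonic function in $L^q(\mathbb{R}^N)$ obeys $|u(x)|\le CR^{-N/q}\|u\|_{L^q}$ by the mean-value property, so $u\equiv 0$. (For $N=1$ one may alternatively invoke directly that a nonnegative concave function on $\mathbb{R}$ is constant, hence zero in $L^q$.)

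For part (b), suppose for contradiction that $u>0$ is a classical solution of $-\Delta u\ge u^q$ with $q\in(1,N/(N-2)]$. Testing against $\xi_R^m$ exactly as above, $\int u^q\xi_R^m\le\int(-\Delta u)\xi_R^m=-\int u\,\Delta(\xi_R^m)\le CR^{-2}\int_{A_R}u\,\xi_R^{m-2}$. Write $u\,\xi_R^{m-2}=(u\,\xi_R^{m/q})\cdot\xi_R^{\,m-2-m/q}$ and apply H\"older with exponents $q$ and $q'=q/(q-1)$; using the identity $(m-2-m/q)q'=m-2q'$ and choosing $m\ge 2q'$ (so $\xi_R^{m-2q'}\le 1$ on $A_R$), one gets
$$
\int_{\mathbb{R}^N}u^q\xi_R^m\;\le\;C\,R^{-2+N/q'}\Bigl(\int_{A_R}u^q\xi_R^m\Bigr)^{1/q}.
$$
Setting $I_R:=\int_{\mathbb{R}^N}u^q\xi_R^m$ and bounding $\int_{A_R}u^q\xi_R^m\le I_R$, this self-improves to $I_R^{1/q'}\le CR^{-2+N/q'}$, i.e.\ $I_R\le C\,R^{\,N-2q'}$, and the exponent $N-2q'$ is $\le 0$ exactly when $q\le N/(N-2)$. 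If $q<N/(N-2)$ the exponent is strictly negative, so $I_R\to 0$, forcing $\int_{\mathbb{R}^N}u^q=0$, which contradicts $u>0$.

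It remains to handle the critical exponent $q=N/(N-2)$, where $q'=N/2$ and the exponent $N-2q'$ vanishes, so a priori we obtain only $u\in L^q(\mathbb{R}^N)$ with $\int u^q\le C$. The refinement is to feed this back: since $u^q$ is now integrable, $\int_{A_R}u^q\xi_R^m\le\int_{A_R}u^q\to 0$ as $R\to\infty$, and since $-2+N/q'=0$ at the critical exponent, the displayed inequality reads $I_R\le C\bigl(\int_{A_R}u^q\xi_R^m\bigr)^{1/q}\to 0$, again contradicting $u>0$. I expect this endpoint step to be the main obstacle; everything else is the routine scaling bookkeeping of the method (legitimacy of the double integration by parts, the bound on $\Delta(\xi_R^m)$, the choice of $m$ large, and tracking the H\"older exponents). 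Of course both statements are classical Liouville results, so one may instead simply quote them from Mitidieri--Pohozaev-- or Gidas--Spruck--type references, as is done here via \cite{IK-1}.
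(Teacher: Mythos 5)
Your proof is correct, but it takes a genuinely different route from the paper, which offers no proof of this lemma at all: it is quoted as a known Liouville-type result from \cite{IK-1}, where it is itself collected from classical sources. Your rescaled test-function (Mitidieri--Pohozaev) argument is a legitimate self-contained derivation: in (a) the exponent $-2+N(1-1/q)$ is indeed nonpositive exactly for $q\le N/(N-2)$ (and automatically negative for $N=1,2$), and combining this with $\|u\|_{L^q(A_R)}\to 0$ on annuli gives harmonicity, after which the mean-value estimate $u(x)\le CR^{-N/q}\|u\|_{L^q}$ finishes; in (b) the capacity estimate $I_R\le CR^{N-2q'}$ handles $q<N/(N-2)$, and your endpoint treatment at $q=N/(N-2)$ (first deduce $u^q\in L^1(\mathbb{R}^N)$ from the uniform bound, then feed the annulus term $\int_{A_R}u^q$ back into the pre-self-improved inequality) is exactly the right device to include the critical exponent, which matters here because the lemma is stated with closed upper endpoints and $1+2/(N-2)=N/(N-2)$. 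Two points worth making explicit if you write this out: fix $m$ an integer with $m\ge\max\{2,2q'\}$ so that $\xi_R^m\in C_c^2(\mathbb{R}^N)$ and the double integration by parts is justified for a classical (super)solution, and note that your monotone-convergence steps use that $R\mapsto\xi_R^m(x)$ is nondecreasing, which is where the radial monotonicity of $\xi$ enters. As for what each approach buys: the paper's citation keeps the exposition short, since only the statement is needed (it is invoked in Lemmas \ref{le3-25-1} and \ref{le3-25-3} and in the proof of Theorem \ref{zh3-23}), whereas your argument makes the result self-contained and shows transparently why the borderline exponent is admissible.
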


\begin{lemma}\label{le3-25-1}
 Assume $\kappa(x)\geq 0$ and $(\lambda_1,\lambda_2,u_1,u_2)\in \mathbb{R}^2\times H_r^1\times H_r^1$ is a solution of \eqref{yun1-18-1}, and assume $p_1,\ p_2<2+4/N$ when $N\leq 4$ ,  $p_1,\ p_2<2+N/(N-2)$ when $N\geq 5$, we have if $u_1\geq 0,u_1\not\equiv 0,u_2\geq 0$, then $\lambda_1<0$, if $u_2\geq 0$, $u_2\not\equiv 0$, $u_1\geq 0$, then $\lambda_2<0$.
\end{lemma}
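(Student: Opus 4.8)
The plan is to argue by contradiction, supposing $\lambda_1 \geq 0$, and derive a differential inequality to which Lemma \ref{3-24-2} applies. Suppose $u_1 \geq 0$, $u_1 \not\equiv 0$, $u_2 \geq 0$ solve \eqref{yun1-18-1}. From the first equation,
\begin{equation}
-\Delta u_1 = \lambda_1 u_1 + \mu_1 u_1^{p_1-1} + r_1\beta u_1^{r_1-1}u_2^{r_2} + \kappa(x)u_2. \nonumber
\end{equation}
Since $\lambda_1 \geq 0$, $\mu_1 > 0$, $\beta > 0$, $\kappa(x) \geq 0$ and all of $u_1, u_2$ are nonnegative, every term on the right-hand side is nonnegative, so $-\Delta u_1 \geq 0$ in $\mathbb{R}^N$. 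Now $u_1 \in H^1_r(\mathbb{R}^N) \subset L^2(\mathbb{R}^N)$, and by Sobolev embedding $u_1 \in L^q(\mathbb{R}^N)$ for some $q \in (1, N/(N-2)]$ (indeed $q = 2$ works when $N \geq 3$ since $2 \leq N/(N-2)$ exactly for $N \leq 4$, but one uses elliptic regularity/bootstrap to land in the right range for larger $N$ — see below). Then part (a) of Lemma \ref{3-24-2} forces $u_1 \equiv 0$, contradicting $u_1 \not\equiv 0$. Hence $\lambda_1 < 0$. The argument for $\lambda_2$ is symmetric, using the second equation.

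The step needing care is ensuring the integrability hypothesis $u_1 \in L^q(\mathbb{R}^N)$ with $q$ in the admissible range $(1, N/(N-2)]$ (for $N \geq 3$). For $N \leq 4$ one has $2 \leq N/(N-2)$, so membership $u_1 \in L^2$ already suffices directly. For $N \geq 5$, $2 > N/(N-2)$, so one must first show $u_1$ decays fast enough to lie in a smaller $L^q$. This is where the restrictions $p_1, p_2 < 2 + N/(N-2)$ (for $N \geq 5$) enter: starting from $u_1 \in L^2 \cap L^{2^*}$, the nonlinear terms $\mu_1 u_1^{p_1-1}$, $r_1\beta u_1^{r_1-1}u_2^{r_2}$ have controlled growth, so a standard elliptic bootstrap on $-\Delta u_1 = $ (right-hand side) $\geq 0$ — combined with the fact that $-\Delta u_1 \geq 0$ and $u_1 \geq 0$ means $u_1$ is superharmonic and one may test against suitable cutoffs — upgrades the integrability of $u_1$ downward into $L^q$ for $q$ close to $1$. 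Concretely, one shows the right-hand side lies in $L^1 \cap L^\infty_{loc}$ after using $u_1, u_2 \in L^\infty$ (which follows from regularity theory, already invoked in the introduction), and then a Newtonian-potential representation or a comparison argument gives $u_1(x) \to 0$ with a decay rate putting $u_1 \in L^q$ for small $q > 1$.

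The main obstacle is precisely this integrability/decay verification in the high-dimensional case $N \geq 5$; the contradiction via Lemma \ref{3-24-2}(a) itself is immediate once $-\Delta u_1 \geq 0$ and $u_1 \in L^q$ are both in hand. I would organize the proof as: (i) write down the differential inequality $-\Delta u_1 \geq 0$ from the sign conditions; (ii) record that weak solutions are classical and bounded by the regularity remark in the introduction; (iii) split into the cases $N \leq 4$ (where $L^2$ membership closes the argument directly) and $N \geq 5$ (where a short bootstrap using the growth restriction on $p_1$ yields $u_1 \in L^q$ for admissible $q$); (iv) invoke Lemma \ref{3-24-2}(a) to conclude $u_1 \equiv 0$, a contradiction, hence $\lambda_1 < 0$; (v) repeat verbatim with indices swapped for $\lambda_2$. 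Part (b) of Lemma \ref{3-24-2} is not needed here but is presumably used elsewhere for the positivity/existence statements.
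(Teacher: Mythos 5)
For $N\leq 4$ your argument is exactly the paper's: $-\Delta u_1\geq 0$ from the sign assumptions, $u_1\in L^2(\mathbb{R}^N)$ with $2\leq N/(N-2)$ (any exponent when $N=2$), and Lemma \ref{3-24-2}(a) gives the contradiction. The gap is in your treatment of $N\geq 5$. There you propose to keep using part (a) by ``bootstrapping downward'' to $u_1\in L^q$ for some $q$ close to $1$; this step cannot be carried out. If $u_1\geq 0$ is superharmonic and $u_1\not\equiv 0$, then by the strong maximum principle $u_1>0$, and comparison with the fundamental solution on large annuli gives $u_1(x)\geq c|x|^{2-N}$ for $|x|$ large; consequently $u_1\notin L^q(\mathbb{R}^N)$ for any $q\leq N/(N-2)$. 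In other words, the integrability hypothesis of Lemma \ref{3-24-2}(a) can never be verified for a nontrivial nonnegative supersolution when $N\geq 5$ --- verifying it \emph{is} the Liouville theorem, not a preliminary step. Elliptic bootstrap raises local integrability/regularity; it does not produce decay at infinity, and the decay one usually has for these problems comes precisely from $\lambda_1<0$, which is the conclusion you are trying to reach, so invoking it would be circular. A Newtonian-potential lower bound likewise only yields the rate $|x|^{2-N}$, which just misses $L^{N/(N-2)}$.

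The paper's route for $N\geq 5$ (written out explicitly in the proof of the companion Lemma \ref{le3-25-3}) is different and is what you should use: from the first equation and the sign conditions, $-\Delta u_1\geq \mu_1 u_1^{p_1-1}$ with $u_1>0$ classical, and the dimension-dependent restriction on $p_1$ puts the exponent $p_1-1$ in the admissible range of Lemma \ref{3-24-2}(b) (this, not growth control for a bootstrap, is the role of that hypothesis; compare the condition $p_i<2+2/(N-2)$ in Theorems \ref{th3-23-1}--\ref{th3-23-2}), so the differential-inequality Liouville theorem gives $u_1\equiv 0$ directly, a contradiction. So part (b) is in fact needed here for $N\geq 5$, contrary to your closing remark; with that replacement for the high-dimensional case your proof matches the paper's.
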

\begin{proof}
In the first case when $u_1\not\equiv 0$, note that $\kappa (x)\geq 0$ and if $\lambda_1\geq 0$ we have
$$-\Delta u_1=\lambda_1u_1+\mu_1 u_1^{p_1-1}+r_1\beta u_1^{r_1-1}u_2^{r_2}+\kappa(x)u_2\geq 0,$$
then by Lemma \ref{3-24-2} we can deduce that $u_1\equiv 0$ which is impossible, then $\lambda_1<0$. Similarly we can deduce the rest part of this lemma.
\end{proof}
 By Lemma \ref{3-24-1} we have $J(u_1,u_2)$ is bounded from below and coercive on $S_1\times S_2$ then we can find a minimizing sequence $\{(v_{1,n},v_{2,n})\}$ for $J|_{S_1\times S_2}$ and because $\kappa(x)\geq 0$ we can assume $v_{i,n}\geq 0$, $i=1,2$. By Ekland variational principle we have $\{(u_{1,n},u_{2,n})\}$ is a PS sequence for $J|_{S_1\times S_2}$ at level $c$, where
$$c:=\inf_{S_1\times S_2}J(u_1,u_2),$$
and $\|v_{i,n}-u_{i,n}\|_{H^1_r}\rightarrow 0$ as $n\rightarrow \infty$. Moreover, $\{(u_{1,n},u_{2,n})\}$ is bounded, then we have
\begin{equation}\label{eq3-25-1}
(u_{1,n},u_{2,n})\rightharpoonup (u_{1,0},u_{2,0})\in H^1_r\times H^1_r,
\end{equation}
by standard arguments of compact embedding, we have $u_{i,0}\geq 0$, $i=1,2$.
\par
From above discussion we have
\begin{equation}\label{3-3}
J|'_{S_1\times S_2}(u_{1,n},u_{2,n})=J'(u_{1,n},u_{2,n})-\lambda_{1,n}(u_{1,n},0)-\lambda_{2,n}(0,u_{2,n})\rightarrow 0
\end{equation}
 in $(H_r^1\times H_r^1)^*$, where
\begin{equation}
\begin{split}
\lambda_{1,n}&=\frac{1}{|u_{1,n}|_2^2}(J'(u_{1,n},u_{2,n}),(u_{1,n},0))\\
&=\frac{1}{a_1^2}(\int_{\mathbb{R}^N} |\nabla u_{1,n}|^2-\mu_1\int_{\mathbb{R}^N} |u_{1,n}|^{p_1}-\beta \int_{\mathbb{R}^N} |u_{1,n}|^{r_1}|u_{2,n}|^{r_2}-\int_{\mathbb{R}^N} \kappa(x) u_{1,n}u_{2,n}),\nonumber
\end{split}
\end{equation}
\begin{equation}
\begin{split}
\lambda_{2,n}&=\frac{1}{|u_{2,n}|_2^2}(J'(u_{1,n},u_{2,n}),(0,u_{2,n}))\\
&=\frac{1}{a_2^2}(\int_{\mathbb{R}^N} |\nabla u_{2,n}|^2-\mu_2\int_{\mathbb{R}^N} |u_{2,n}|^{p_2}-\beta \int_{\mathbb{R}^N} |u_{1,n}|^{r_1}|u_{2,n}|^{r_2}-\int_{\mathbb{R}^N} \kappa(x) u_{1,n}u_{2,n}).\nonumber
\end{split}
\end{equation}
are bounded sequences and we may assume $\lambda_{1,n}\rightarrow \lambda_{1,0}$, $\lambda_{2,n}\rightarrow \lambda_{2,0}$ up to the subsequence. Then by weak convergence we have
\begin{equation}
J'(u_{1,0},u_{2,0})-\lambda_{1,0}(u_{1,0},0)-\lambda_{2,0}(0,u_{2,0})=0\ \textup{in}\ (H_r^1\times H_r^1)^*.\nonumber
\end{equation}
Moreover, $(\lambda_1,\lambda_2,u_{1,0},u_{2,0})$ is a solution of \eqref{yun1-18-1}.
In order to obtain $(u_{1,0},u_{2,0})$ also satisfies \eqref{yun1-18-6}, we need the following lemma.
\begin{lemma}\label{le3-25-2}
 If $\kappa(x)\geq 0$, $\kappa (x)\in L^p(\mathbb{R}^N)$ for some ${N}/{2}<p<\infty$ and $\lambda_{i,0}<0$ then $u_{i,n}\rightarrow u_{i,0}$, $i=1,2$. As a consequence if $\lambda_{i,0}<0$, $i=1,2$, then $(\lambda_{1,0},\lambda_{2,0},u_{1,0},u_{2,0})$ is a solution of \eqref{yun1-18-1}-\eqref{yun1-18-6}.
\end{lemma}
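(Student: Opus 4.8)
\medskip
\noindent\textbf{Proof proposal.} The plan is to upgrade the weak convergence \eqref{eq3-25-1} to strong convergence in $H^1_r\times H^1_r$, which immediately forces the constraint \eqref{yun1-18-6} to pass to the limit. Set $w_{i,n}:=u_{i,n}-u_{i,0}$, so $w_{i,n}\rightharpoonup 0$ in $H^1_r$, and by the compact Strauss embedding $H^1_r\hookrightarrow\hookrightarrow L^q(\mathbb R^N)$, valid for every $q\in(2,2^*)$ since $N\ge 2$, we have $w_{i,n}\to 0$ strongly in all such $L^q$. Expanding $a_i^2=|u_{i,n}|_2^2=|u_{i,0}|_2^2+2\int_{\mathbb R^N}u_{i,0}w_{i,n}+|w_{i,n}|_2^2$ and using $w_{i,n}\rightharpoonup 0$ in $L^2$ gives $|w_{i,n}|_2^2\to d_i:=a_i^2-|u_{i,0}|_2^2\ge 0$; passing to a subsequence we may also assume $|\nabla w_{i,n}|_2^2\to \ell_i\ge 0$ and $\lambda_{i,n}\to\lambda_{i,0}$. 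Everything reduces to proving $d_i=0$.

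First I would test the Palais--Smale identity \eqref{3-3} against the bounded functions $(w_{1,n},0)$ and $(0,w_{2,n})$; since the dual norm in \eqref{3-3} tends to $0$, each pairing is $o(1)$. Consider $i=1$. The purely nonlinear term vanishes, $\int_{\mathbb R^N}|u_{1,n}|^{p_1-2}u_{1,n}w_{1,n}\to 0$, because $\{|u_{1,n}|^{p_1-1}\}$ is bounded in $L^{p_1/(p_1-1)}$ while $w_{1,n}\to 0$ in $L^{p_1}$; the nonlinear coupling term $\int_{\mathbb R^N}|u_{1,n}|^{r_1-1}|u_{2,n}|^{r_2}|w_{1,n}|$ vanishes by a H\"older inequality with a triple of conjugate exponents chosen so that all factors sit in $L^{r_1+r_2}$-type spaces (here $2<r_1+r_2<2^*$ and $w_{1,n}\to 0$ in $L^{r_1+r_2}$); and the linear coupling term is controlled via $\bigl|\int_{\mathbb R^N}\kappa(x)u_{2,n}w_{1,n}\bigr|\le|\kappa|_p\,|u_{2,n}|_{2p/(p-1)}\,|w_{1,n}|_{2p/(p-1)}$, where the hypothesis $N/2<p<\infty$ is used precisely to guarantee $2p/(p-1)\in(2,2^*)$, so that $|w_{1,n}|_{2p/(p-1)}\to 0$ while $|u_{2,n}|_{2p/(p-1)}$ stays bounded. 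On the other hand, weak convergence of the gradients and of the $L^2$ functions gives $\int_{\mathbb R^N}\nabla u_{1,n}\cdot\nabla w_{1,n}=|\nabla w_{1,n}|_2^2+o(1)$ and $\int_{\mathbb R^N}u_{1,n}w_{1,n}=|w_{1,n}|_2^2+o(1)$. Collecting terms, $|\nabla w_{1,n}|_2^2-\lambda_{1,n}|w_{1,n}|_2^2=o(1)$, that is $\ell_1=\lambda_{1,0}d_1$; the same computation with $(0,w_{2,n})$ gives $\ell_2=\lambda_{2,0}d_2$.

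Now the hypothesis $\lambda_{i,0}<0$ finishes the proof: from $\ell_i\ge 0$, $d_i\ge 0$ and $\ell_i=\lambda_{i,0}d_i\le 0$ we must have $d_i=\ell_i=0$, hence $w_{i,n}\to 0$ in $H^1_r$, i.e. $u_{i,n}\to u_{i,0}$. In particular $|u_{i,0}|_2=\lim_n|u_{i,n}|_2=a_i$, so $(u_{1,0},u_{2,0})\in S_1\times S_2$ and \eqref{yun1-18-6} holds; combined with the weak-limit equation $J'(u_{1,0},u_{2,0})-\lambda_{1,0}(u_{1,0},0)-\lambda_{2,0}(0,u_{2,0})=0$ already established after \eqref{eq3-25-1}, this shows $(\lambda_{1,0},\lambda_{2,0},u_{1,0},u_{2,0})$ solves \eqref{yun1-18-1}-\eqref{yun1-18-6}.

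The main obstacle is exactly the lack of compactness of $H^1_r\hookrightarrow L^2(\mathbb R^N)$: a fixed amount of $L^2$-mass $d_i$ could in principle escape to infinity, and subcritical compactness alone cannot recover it. The identity $\ell_i=\lambda_{i,0}d_i$ shows that this escape is ruled out precisely when $\lambda_{i,0}<0$, which is why the genuinely delicate input is the negativity of the Lagrange multipliers supplied by Lemma \ref{le3-25-1} (via the Liouville theorems of Lemma \ref{3-24-2}). A secondary technical point, already flagged above, is that the treatment of the linear coupling term forces the sharp assumption $p>N/2$ on $\kappa$.
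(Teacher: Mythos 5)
Your proposal is correct and follows essentially the same route as the paper: both exploit the compact radial embedding (and $\kappa\in L^p$, $p>N/2$) to pass the subcritical and coupling terms to the limit, test the Palais--Smale identity \eqref{3-3}, and use $\lambda_{i,0}<0$ so that the resulting quadratic-form identity forces strong $H^1_r$ convergence. The only cosmetic difference is that you test against the differences $(w_{1,n},0)$, $(0,w_{2,n})$ and conclude $\ell_i=\lambda_{i,0}d_i=0$, whereas the paper tests against $(u_{i,n},0)$ and $(u_{i,0},0)$ and deduces convergence of the equivalent norm $|\nabla u|_2^2-\lambda_{i,0}|u|_2^2$.
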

\begin{proof}

Because $\kappa (x)\in L^{p}(\mathbb{R}^N)$ where $p>{N}/{2}$, then by the compact embedding and H\"{o}lder inequality we have
$$\int_{\mathbb{R}^N}\kappa(x)u_{1,n}u_{2,n}\rightarrow \int_{\mathbb{R}^N}\kappa(x)u_{1,0}u_{2,0},$$
$$|u_{i,n}|_{p_i}\rightarrow |u_{i,0}|_{p_i}\ i=1,2,$$
$$\int_{\mathbb{R}^N}|u_{1,n}|^{r_1}|u_{2,n}|^{r_2}\rightarrow \int_{\mathbb{R}^N}|u_{1,0}|^{r_1}|u_{2,0}|^{r_2},$$
as $n\rightarrow\infty$. Notice that $\lambda_1<0$, by \eqref{3-3} and weak convergence we can deduce that
$$
(J'(u_{1,n},u_{2,n})-\lambda_{1,0}(u_{1,n},0),(u_{1,n},0))\rightarrow 0,
$$
$$
(J'(u_{1,0},u_{2,0})-\lambda_{1,0}(u_{1,0},0),(u_{1,0},0))= 0.
$$
then from the above five equations we have that
$$
|\nabla u_{1,n}|_2^2-\lambda_{1,0}|u_{1,n}|_2^2\rightarrow |\nabla u_{1,0}|_2^2-\lambda_{1,0}|u_{1,0}|_2^2.
$$
Because $\lambda_1<0$, we get $u_{1,n}\rightarrow u_{1,0}$ in  $H^1_r $. Similarly if $\lambda_2<0$ we can get $u_{2,n}\rightarrow u_{2,0}$ in $H^1_r$. As a consequence when $\lambda_{1,0}$ and $\lambda_{2,0}$ are both negative, then $(\lambda_{1,0},\lambda_{2,0},u_{1,0},u_{2,0})$ is a solution of \eqref{yun1-18-1}-\eqref{yun1-18-6}. Moreover, by maximum principle we have $u_{1,0}$ and $u_{2,0}$ are positive.
\end{proof}

Next we consider  the single equation which is useful in the following proof,
\begin{equation}\label{yun1-20-1}
-\Delta u+\lambda u=\mu |u|^{p-2}u\ \textup{in}\ \mathbb{R}^N,
\end{equation}
with the condition
$$\int_{{\mathbb R}^N}u^2=a^2,$$
where $\mu>0$, $2<p<2+4/N$ and $\lambda$ is Lagrangian multiplier. By Lemma 3.1 of \cite{B-1}, we know the corresponding energy functional of \eqref{yun1-20-1} on $S_a$ denotes by
$$I(u):=I_{a,\mu}(u)=\frac{1}{2}\int_{\mathbb{R}^N} |\nabla u|^2-\frac{\mu}{p}\int_{\mathbb{R}^N} |u|^p,$$
and the least energy of $I$ on $S_a$ is denoted by
$$m_p^{\mu}(a)=\inf_{S_a}I(u),$$
which is achieved at some $u_a\in H^1_r$, and there exists $\lambda_a>0$ such that $(\lambda_a,u_a)$ is a solution of \eqref{yun1-20-1} and $|u_a|_2=a$. Moreover, $m_p^{\mu}(a)<0$.
\par
\begin{proof}[Proof of Theorem 1.1]
%\noindent\textbf{Proof of theorem 1.1.}
 We denote the energy functional $\bar J$ for \eqref{yun1-18-1}-\eqref{yun1-18-6} on $S_1\times S_2$ when $\kappa(x)=0$, where % which introduced in \cite{B-1}
$$
\bar J(u_1,u_2)=\frac{1}{2}(\int_{\mathbb{R}^N} |\nabla u_1|^2+\int_{\mathbb{R}^N} |\nabla u_2|^2)-\frac{\mu_1}{p_1}|u_1|_{p_1}^{p_1}-\frac{\mu_2}{p_2}|u_2|_{p_2}^{p_2}-\beta\int_{\mathbb{R}^N} |u_1|^{r_1}|u_2|^{r_2}.
$$
 Since $\beta>0$ and $\kappa(x)\geq 0$
we have $J(u_1,u_2)\leq \bar J(u_1,u_2)$, if $u_1\geq 0$, $u_2\geq 0$. Notice that $\bar J(u_1,u_2)=\bar J(|u_1|,|u_2|)$, we get
\begin{equation}
\begin{split}
c:=\inf_{S_1\times S_2} J(u_1,u_2)&\leq\inf_{\substack{S_1\times S_2 \\u_1,u_2\geq 0}}J(u_1,u_2)\\
&\leq \inf_{\substack{S_1\times S_2\\u_1,u_2\geq 0}}\bar J(u_1,u_2)\\
&=\inf_{S_1\times S_2}\bar J(u_1,u_2)\\
&\leq m_{p_1}^{\mu_1}(a_1)+m_{p_2}^{\mu_2}(a_2)\\
&<0.\nonumber
\end{split}
\end{equation}

Then for the weak limit $(u_{1,0},u_{2,0})$ of the PS sequence $\{(u_{1,n},u_{2,n})\}$ which is obtained in \eqref{eq3-25-1}, we have the following four cases.

(i) If $(u_{1,0},u_{2,0})=(0,0)$, then by compact embedding we have
$$
0>c=\lim_{n\rightarrow \infty}J(u_{1,n},u_{2,n})\geq \liminf_{n\rightarrow \infty}\frac{1}{2}(|\nabla u_{1,n}|_2^2+|\nabla u_{2,n}|_2^2)\geq 0,
$$
which is impossible.

(ii) If $u_{1,0}\not\equiv 0$, but $u_{2,0}\equiv 0$, because $(\lambda_1,\lambda_2,u_{1,0},u_{2,0})$ is a solution of \eqref{yun1-18-1} then we must have $u_{1,0}\equiv 0$, because $\kappa(x)>0$, which is impossible.

(iii) If $u_{1,0}\equiv 0$ but $u_{2,0}\not\equiv 0$, it is same as (ii), which is impossible.

(iv) If $u_{1,0}\not\equiv 0$ and $u_{2,0}\not\equiv 0$, by Lemma \ref{le3-25-1} we have $\lambda_{1,0},\lambda_{2,0}<0$, by  Lemma \ref{le3-25-2}, we have $u_{i,n}\rightarrow u_{i,0}$ in $H^1_r$, by maximum principle we have $u_{1,0}$, $u_{2,0}>0$, then we finish the proof.\end{proof}

Now we consider the case $\kappa(x)\leq 0$.\par
 Because $J$ is bounded from below and coercive, we can find a minimizing sequence $(v_{1,n},v_{2,n})\in S_1\times S_2$. Notice that $\kappa(x)\leq 0$ then without loss of generality we may assume $v_{1,n}\leq 0$ and $v_{2,n}\geq 0$, then by Ekland variational principle we have that there exists a PS sequence for $J|_{S_1\times S_2}$ at level $c$, where
$$
c:=\inf_{S_1\times S_2}J(u_1,u_2),
$$
and $\|v_{i,n}-u_{i,n}\|_{H_r^1}\rightarrow 0$, $i=1,2$. By the coerciveness of $J$ on $S_1\times S_2$, there exists $(u_{1,0},u_{2,0})\in H_r^1\times H_r^1$ such that
\begin{equation}\label{eq3-25-2}
(u_{1,n},u_{2,n})\rightharpoonup(u_{1,0},u_{2,0}) \ \ \textup{in}\ H_r^1\times H_r^1,
\end{equation}
in additional $u_{1,0}\leq 0$, $u_{2,0}\geq 0$. Similarly to prove Theorem \ref{th3-23-1} we can get $\lambda_{1,0}, \lambda_{2,0}\in \mathbb{R}$ such that $(\lambda_{1,0},\lambda_{2,0},u_{1,0},u_{2,0})$ is a solution of \eqref{yun1-18-1}. We also have following lemma, which is similar to Lemma \ref{le3-25-1}.
\begin{lemma}\label{le3-25-3}
Assume $\kappa(x)\leq 0$ and $(\lambda_1,\lambda_2,u_1,u_2)\in \mathbb{R}^2\times H_r^1\times H_r^1$ is a solution of \eqref{yun1-18-1}, and assume $p_1,\ p_2<2+4/N$ when $N\leq 4$,  $p_1,\ p_2<2+N/(N-2)$ when $N\geq 5$, we have if $u_1\leq 0,u_1\not\equiv 0,u_2\geq 0$, then $\lambda_1<0$; if $u_2\geq 0$, $u_2\not\equiv 0$, $u_1\leq 0$, then $\lambda_2<0$.
\end{lemma}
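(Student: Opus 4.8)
The plan is to run the argument of Lemma \ref{le3-25-1} after reflecting the first component so as to return to the sign-definite situation treated there. Suppose, for contradiction, that $(\lambda_1,\lambda_2,u_1,u_2)$ solves \eqref{yun1-18-1} with $u_1\le 0$, $u_1\not\equiv 0$, $u_2\ge 0$ and $\lambda_1\ge 0$. Put $v:=-u_1\ge 0$, so that $v\in H^1_r$, $v\not\equiv 0$; since $v\ge 0$ and $u_2\ge 0$ one has $|u_1|^{p_1-2}u_1=-v^{p_1-1}$ and $|u_1|^{r_1-2}u_1|u_2|^{r_2}=-v^{r_1-1}u_2^{r_2}$, and the first equation of \eqref{yun1-18-1} becomes
$$-\Delta v-\lambda_1 v=\mu_1 v^{p_1-1}+r_1\beta v^{r_1-1}u_2^{r_2}-\kappa(x)u_2.$$
Now all three terms on the right-hand side are nonnegative: $\mu_1,\beta>0$ and $v,u_2\ge 0$ take care of the first two, while $\kappa(x)\le 0$ together with $u_2\ge 0$ gives $-\kappa(x)u_2\ge 0$. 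Adding $\lambda_1 v\ge 0$ we obtain $-\Delta v\ge\mu_1 v^{p_1-1}\ge 0$ in $\mathbb{R}^N$, and by elliptic regularity $v$ is a classical function, so Lemma \ref{3-24-2} applies.

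From here I would split on the dimension exactly as in \cite{IK-1}. If $N\le 4$ then $v\in H^1_r\subset L^2(\mathbb{R}^N)$, and $q=2$ is admissible in part (a) of Lemma \ref{3-24-2} (since $2\le N/(N-2)$ for $N=3,4$, and every $q>1$ is admissible when $N=2$), so $v\equiv 0$. If $N\ge 5$, the strong maximum principle applied to the superharmonic function $v$ upgrades $v\ge 0$, $v\not\equiv 0$ to $v>0$ everywhere; rewriting $-\Delta v\ge\mu_1 v^{p_1-1}$ as $-\Delta(\mu_1^{1/(p_1-2)}v)\ge(\mu_1^{1/(p_1-2)}v)^{p_1-1}$ and using that the dimensional restriction on $p_1$ places $p_1-1$ in the range where part (b) of Lemma \ref{3-24-2} applies, we again contradict the existence of a positive classical solution. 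In either case $v\equiv 0$, contradicting $u_1\not\equiv 0$; hence $\lambda_1<0$. For the second assertion no reflection is needed: if $u_2\ge 0$, $u_2\not\equiv 0$, $u_1\le 0$ and $\lambda_2\ge 0$, then $\kappa(x)\le 0$ and $u_1\le 0$ give $\kappa(x)u_1\ge 0$, so the second equation of \eqref{yun1-18-1} yields $-\Delta u_2\ge\mu_2 u_2^{p_2-1}\ge 0$, and the same dimensional dichotomy forces $u_2\equiv 0$, a contradiction; hence $\lambda_2<0$.

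The only step that is not a mechanical transcription of the $\kappa\ge 0$ case is the Liouville conclusion when $N\ge 5$: there $v\in L^2$ is no longer enough to feed part (a) of Lemma \ref{3-24-2}, since $N/(N-2)<2$. One must instead retain the nonlinear lower bound $-\Delta v\ge\mu_1 v^{p_1-1}$, combine the fact that weak solutions are classical with the strong maximum principle to produce a \emph{positive} classical solution of the differential inequality, and then invoke the Liouville theorem for $-\Delta v\ge v^{q}$; this is exactly where the restriction on $p_1,p_2$ in dimension $N\ge 5$ is used. Everything else — the reflection, the sign bookkeeping for the coupling and linear terms, and the case $N\le 4$ — is routine.
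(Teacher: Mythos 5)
Your proposal is correct and takes essentially the same route as the paper: reflect the first component to $v=-u_1\ge 0$, observe that $\lambda_1\ge 0$, $\mu_1,\beta>0$, $\kappa\le 0$ and $u_2\ge 0$ make every term on the right-hand side nonnegative so that $-\Delta v\ge \mu_1 v^{p_1-1}\ge 0$, and then invoke Lemma \ref{3-24-2}(a) when $N\le 4$ and Lemma \ref{3-24-2}(b) when $N\ge 5$, with the second assertion handled in the same way directly on $u_2$. Your additional details (checking that $q=2$ is admissible in part (a), using the strong maximum principle to get a positive classical solution before applying part (b), and scaling out $\mu_1$) merely make explicit steps the paper leaves implicit.
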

\begin{proof}
When $N\leq4$, for the first case, we prove it by contradiction. If $\lambda_1\geq 0$, we have $-u_1\geq 0$, $-u_1\not\equiv 0$
$$
-\Delta (-u_1)=\lambda_1(-u_1)+\mu_1|u_1|^{p_1-2}(-u_1)+\beta r_1|u_1|^{r_1-2}(-u_1)u_2^{r_2}+(-\kappa(x) u_2).
$$
So we get $-\Delta (-u_1)\geq 0$, by Lemma \ref{3-24-2} we know $u_1\equiv 0$, which is impossible, thus $\lambda_1<0$. Similarly if $u_2\geq 0$, $u_2\not\equiv 0$ and $u_1\leq 0$, then $\lambda_2<0$.\par
When $N\geq 5$ note that $-\Delta (-u_1)\geq \mu_1(-u_1)^{p_1-1}$, $u_1\leq 0$, and by Lemma \ref{3-24-2} we have $u_1\equiv 0$, which is impossible. The  rest part of the proof  is the same as Lemma \ref{le3-25-1}.
\end{proof}
In order to obtain that $(u_{1,0},u_{2,0})$ in \eqref{eq3-25-2} satisfies $(1.4)$, we need the following lemma.
\begin{lemma}\label{le3-25-4}
If $\kappa(x)\leq 0$, $\kappa (x)\in L^p(\mathbb{R}^N)$ for some $N/2<p<\infty$ and $\lambda_{i,0}<0$, then $u_{i,n}\rightarrow u_{i,0}$, $i=1,2$. As a consequence if $\lambda_{i,0}<0$, $i=1,2$, then $(\lambda_{1,0},\lambda_{2,0},u_{1,0},u_{2,0})$ is a solution of \eqref{yun1-18-1}-\eqref{yun1-18-6}.
\end{lemma}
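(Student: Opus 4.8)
The plan is to follow, almost verbatim, the argument of Lemma \ref{le3-25-2}: the sign pattern $u_{1,0}\le 0$, $u_{2,0}\ge 0$ from \eqref{eq3-25-2} plays no role in the compactness mechanism, so the scheme is identical. First I would record the convergences that come from the compact embedding $H^1_r\hookrightarrow L^q(\mathbb{R}^N)$, valid for every $2<q<2^*$: namely $|u_{i,n}|_{p_i}\to|u_{i,0}|_{p_i}$ for $i=1,2$, and, using the same H\"older splitting $2\le r_1q,\ r_2q'\le 2^*$, $1/q+1/q'=1$ as in Lemma \ref{3-24-1}, also $\int_{\mathbb{R}^N}|u_{1,n}|^{r_1}|u_{2,n}|^{r_2}\to\int_{\mathbb{R}^N}|u_{1,0}|^{r_1}|u_{2,0}|^{r_2}$. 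For the linear coupling term I would use $\kappa\in L^p(\mathbb{R}^N)$ with $p>N/2$: splitting $\mathbb{R}^N=B_R\cup B_R^c$, on $B_R$ one has $u_{1,n}u_{2,n}\to u_{1,0}u_{2,0}$ strongly in $L^{p'}(B_R)$ by compactness, while on $B_R^c$ the integral $\int_{B_R^c}|\kappa|\,|u_{1,n}|\,|u_{2,n}|$ is bounded by $|\kappa|_{L^p(B_R^c)}$ times a constant depending only on $a_1,a_2$ and the Sobolev constant, uniformly in $n$, and $|\kappa|_{L^p(B_R^c)}\to 0$ as $R\to\infty$; hence $\int_{\mathbb{R}^N}\kappa u_{1,n}u_{2,n}\to\int_{\mathbb{R}^N}\kappa u_{1,0}u_{2,0}$.

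Second, I would test the Palais--Smale condition \eqref{3-3} against $(u_{1,n},0)$ and pass to the limit: since $\lambda_{1,n}\to\lambda_{1,0}$ and $\{(u_{1,n},u_{2,n})\}$ is bounded in $H^1_r\times H^1_r$, we get $(J'(u_{1,n},u_{2,n})-\lambda_{1,0}(u_{1,n},0),(u_{1,n},0))\to 0$, and by weak convergence together with the limiting equation, $(J'(u_{1,0},u_{2,0})-\lambda_{1,0}(u_{1,0},0),(u_{1,0},0))=0$. Subtracting and inserting the convergences from the first paragraph yields
\[
|\nabla u_{1,n}|_2^2-\lambda_{1,0}|u_{1,n}|_2^2\to|\nabla u_{1,0}|_2^2-\lambda_{1,0}|u_{1,0}|_2^2 .
\]
Because $\lambda_{1,0}<0$ — which is exactly the hypothesis of the lemma, and in the application is guaranteed by Lemma \ref{le3-25-3} — the expression $|\nabla u|_2^2-\lambda_{1,0}|u|_2^2$ is an equivalent Hilbert norm on $H^1_r$, so weak convergence plus convergence of these norms forces $u_{1,n}\to u_{1,0}$ strongly in $H^1_r$. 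Repeating the computation with $(0,u_{2,n})$ and $\lambda_{2,0}<0$ gives $u_{2,n}\to u_{2,0}$ in $H^1_r$.

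Third, strong convergence gives $|u_{i,0}|_2=\lim_n|u_{i,n}|_2=a_i$, so $(u_{1,0},u_{2,0})\in S_1\times S_2$; since we already know $(\lambda_{1,0},\lambda_{2,0},u_{1,0},u_{2,0})$ solves \eqref{yun1-18-1}, it in fact solves \eqref{yun1-18-1}-\eqref{yun1-18-6}. Finally I would upgrade the sign information exactly as at the end of Lemma \ref{le3-25-3}: writing the equations for $-u_{1,0}\ge 0$ and $u_{2,0}\ge 0$ in the form $(-\Delta-\lambda_{i,0})(\pm u_{i,0})\ge 0$ with $-\lambda_{i,0}>0$, and recalling that solutions are classical by elliptic regularity, the strong maximum principle gives $u_{1,0}<0$ and $u_{2,0}>0$ (the possibility that one component vanishes identically being excluded in the application precisely as in the proof of Theorem \ref{th3-23-1}, since $\kappa\not\equiv 0$).

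The argument contains no genuinely hard step; the one point that deserves care is the convergence of the linear coupling term $\int_{\mathbb{R}^N}\kappa u_{1,n}u_{2,n}$, where the global integrability $\kappa\in L^p$ with $p>N/2$ — rather than mere boundedness — is what makes the tail estimate work, and hence what ultimately forces $|u_{i,0}|_2=a_i$. The sign hypothesis $\kappa\le 0$ is not used in this lemma itself; it entered earlier, in fixing the signs of the minimizing sequence and in Lemma \ref{le3-25-3} to secure $\lambda_{i,0}<0$.
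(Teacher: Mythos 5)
Your argument is correct and is essentially the paper's own: the paper proves this lemma simply by the remark ``Same as Lemma \ref{le3-25-2}'', and your proposal reproduces exactly that argument (compact embedding plus the $L^p$, $p>N/2$, tail estimate for the $\kappa$-term, testing the Palais--Smale relation with $(u_{1,n},0)$ and $(0,u_{2,n})$, and using $\lambda_{i,0}<0$ to turn norm convergence of $|\nabla u|_2^2-\lambda_{i,0}|u|_2^2$ into strong convergence). The extra detail you give on the splitting $B_R\cup B_R^c$ and the closing maximum-principle remark are consistent elaborations, not a different route.
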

\begin{proof}
Same as Lemma 2.5.
\end{proof}
Now we prove the existence of solution for \eqref{yun1-18-1}-\eqref{yun1-18-6} when $\kappa(x)<0$.

\begin{proof}[Proof of theorem 1.2.] Using method of the proof of Theorem \ref{th3-23-1} and noticing that $\kappa(x)\leq 0$, we have
\begin{equation}
\begin{split}
c:=\inf_{S_1\times S_2} J(u_1,u_2)&\leq\inf_{\substack{S_1\times S_2 \\u_1\leq 0,u_2\geq 0}}J(u_1,u_2)\\
&\leq \inf_{\substack{S_1\times S_2\\u_1\leq 0,u_2\geq 0}}\bar J(u_1,u_2)\\
&=\inf_{S_1\times S_2}\bar J(u_1,u_2)\\
&\leq m_{p_1}^{\mu_1}(a_1)+m_{p_2}^{\mu_2}(a_2)\\
&<0.\nonumber
\end{split}
\end{equation}
Then we have following four cases.

(i) If $(u_{1,0},u_{2,0})=(0,0)$, then by compact embedding we have
$$
0>c=\lim_{n\rightarrow \infty}J(u_{1,n},u_{2,n})\geq \liminf_{n\rightarrow \infty}\frac{1}{2}(|\nabla u_{1,n}|_2^2+|\nabla u_{2,n}|_2^2)\geq 0,
$$
which is impossible.

(ii) If $u_{1,0}\not\equiv 0$ but $u_{2,0}\equiv 0$, because $(\lambda_1,\lambda_2,u_{1,0},u_{2,0})$ is a solution of \eqref{yun1-18-1}, then we must have $u_{1,0}\equiv 0$ because $\kappa(x)<0$, which is impossible.

(iii) If $u_{1,0}\equiv 0$ but $u_{2,0}\not\equiv 0$, the proof is same as (ii), it is impossible.

(iv) If $u_{1,0}\not\equiv 0$ and $u_{2,0}\not\equiv 0$, by Lemma \ref{le3-25-3}, we have $\lambda_{1,0}$, $\lambda_{2,0}<0$. Then by Lemma \ref{le3-25-4} $u_{i,n}\rightarrow u_{i,0}$ in $H^1_r$, by maximum principle we have $u_{1,0}<0$, $u_{2,0}>0$. Finally if we take $\{(v_{1,n},v_{2,n})\}$ as the minimizing sequence of $J|_{S_1\times S_2}$, such that $v_{1,n}\geq 0$, $v_{2,n}\leq 0$, then we can get $u_{1,0}>0$, $u_{2,0}<0$, and there exists $(\lambda_{1,0},\lambda_{2,0})$ such that $(\lambda_{1,0},\lambda_{2,0},u_{1,0},u_{2,0})$ is a solution of \eqref{yun1-18-1}-\eqref{yun1-18-6}, then we finish the proof.\end{proof}

\begin{remark}
Moreover, by the properties of  Schwarz rearrangement and $\beta>0$, we can easily deduce that the solution which is found in Theorem \ref{th3-23-1} and Theorem \ref{th3-23-2} is a ground state solution of \eqref{yun1-18-1}-\eqref{yun1-18-6} in $H^1(\mathbb{R}^N)\times H^1(\mathbb{R}^N)$.
\end{remark}
\section{$L^2-$supercritical case}

%\subsection{$L^2$ supercritical and $\Omega=\mathbb{R}^3$}
In this section we consider the solutions for system \eqref{yun1-18-5}-\eqref{yun1-18-2} when $N=3$, $p_1=p_2=4$ and $r_1=r_2=2$, which is $L^2-$supercritical case,
\iffalse \begin{equation}\label{eq:diricichlet}
\begin{cases}
-\Delta u_1-\lambda_1 u_1=\mu_1 u_1^3+\beta u_1u_2^2+\kappa (x)u_2\ \textup {in}\ \mathbb{R}^3,\\
-\Delta u_2-\lambda_2 u_2=\mu_2 u_2^3+\beta u_1^2 u_2+\kappa (x)u_1\ \textup {in}\ \mathbb{R}^3,\\
\int_{\mathbb{R}^3} u_1^2=a_1^2, \int_{\mathbb{R}^3} u_2^2=a_2^2, u_1\in H^1_r.u_2\in H^1_r,
\end{cases}
\end{equation}\fi
the corresponding energy functional on $S_1\times S_2$ is defined by
$$J(u_1,u_2)=\frac{1}{2}\int_{\mathbb{R}^3} |\nabla u_1|^2+|\nabla u_2|^2-\frac{1}{4}\int_{\mathbb{R}^3}\mu_1 u_1^4+\mu_2 u_2^4+2\beta u_1^2u_2^2-\int_{\mathbb{R}^3} \kappa(x)u_1u_2,$$
where $\mu_1$, $\mu_2$, $\beta>0$, $\kappa(x)>0$ and $\kappa(x)\in L^{\infty}(\mathbb{R}^3)$. $J|_{S_1\times S_2}$ is unbounded from below, so we can not achieve $\inf_{S_1\times S_2}J(u_1,u_2)$. In order to get the crtical point of $J|_{S_1\times S_2}$, we will try to find a minimax value of $J|_{S_1\times S_2}$, by constructing a mountain pass structure on $S_1\times S_2$. For this purpose we introduce the following two sets
$$A_{K_1}:=\{(u_1,u_2)\in S_1\times S_2:\int_{\mathbb{R}^3} |\nabla u_1|^2+|\nabla u_2|^2\leq K_1\},$$
$$B_{K_2}:=\{(u_1,u_2)\in S_1\times S_2:\int_{\mathbb{R}^3} |\nabla u_1|^2+|\nabla u_2|^2=K_2\}.$$
By Lemma \ref{3-23-1} we have
$$\int_{\mathbb{R}^3}\mu_1 u_1^4+\mu_2 u_2^4+2\beta u_1^2u_2^2\leq C_{a_1,a_2}(\int_{\mathbb{R}^3} |\nabla u_1|^2+|\nabla u_2|^2)^{\frac{3}{2}},$$
 where $C_{a_1,a_2}=((\mu_1+\beta)a_1S^4+(\mu_2+\beta)a_2S^4)$ and $S>0$ denotes the Sobolev embedding constant in $\mathbb{R}^3$.
\begin{lemma}
There exists $C_1>0$, where $C_1:=C_1(|\kappa(x)|_{\infty},a_1,a_2)$ and $K_1>0$ such that for any $(u_1,u_2)\in A_{K_1}$
\begin{equation}
J(u_1,u_2)>-C_1.
\end{equation}\label{yun1-19-1}
\end{lemma}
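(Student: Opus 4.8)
Lemma 3.1 asserts that the functional $J$ on the sublevel set $A_{K_1}$ of $S_1 \times S_2$ is bounded from below by an explicit negative constant $-C_1$ depending only on $|\kappa(x)|_\infty$, $a_1$, $a_2$, once $K_1$ is chosen small enough. Here is how I would prove it.

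The plan is to estimate $J(u_1,u_2)$ from below on $A_{K_1}$ term by term. Write $t := \int_{\mathbb{R}^3} |\nabla u_1|^2 + |\nabla u_2|^2 \le K_1$. The quadratic gradient term contributes $\frac{1}{2}t \ge 0$. For the quartic terms, I would invoke the Gagliardo–Nirenberg estimate already recorded just before the lemma statement, namely
$$
\int_{\mathbb{R}^3}\mu_1 u_1^4+\mu_2 u_2^4+2\beta u_1^2u_2^2\leq C_{a_1,a_2}\, t^{3/2},
$$
so this group of terms is bounded below by $-\frac{1}{4}C_{a_1,a_2}\, t^{3/2} \ge -\frac{1}{4}C_{a_1,a_2}\, K_1^{3/2}$. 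For the linear coupling term, Hölder's inequality together with $|u_i|_2 = a_i$ gives
$$
\left|\int_{\mathbb{R}^3}\kappa(x)u_1u_2\right| \le |\kappa(x)|_\infty\, |u_1|_2\, |u_2|_2 = |\kappa(x)|_\infty\, a_1 a_2,
$$
which is a fixed constant independent of $(u_1,u_2)\in S_1\times S_2$. Combining these three estimates,
$$
J(u_1,u_2) \ge \tfrac{1}{2}t - \tfrac{1}{4}C_{a_1,a_2}K_1^{3/2} - |\kappa(x)|_\infty a_1 a_2 \ge -\tfrac{1}{4}C_{a_1,a_2}K_1^{3/2} - |\kappa(x)|_\infty a_1 a_2,
$$
so one may simply take $K_1$ to be any fixed positive number (e.g. $K_1 = 1$, or whatever value is needed for the mountain-pass construction that follows) and set $C_1 := \frac{1}{4}C_{a_1,a_2}K_1^{3/2} + |\kappa(x)|_\infty a_1 a_2 + 1$, which depends only on $|\kappa(x)|_\infty$, $a_1$, $a_2$ (and the fixed $\mu_i$, $\beta$, $K_1$), giving the strict inequality $J(u_1,u_2) > -C_1$.

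There is essentially no obstacle here: the lemma is a direct packaging of the Gagliardo–Nirenberg and Hölder bounds, and the only mild subtlety is bookkeeping — making sure the constant $C_1$ is declared to depend only on the advertised quantities, and that $K_1$ is chosen (or left free) consistently with the later use of $A_{K_1}$ in building the mountain-pass geometry. The strictness of the inequality is cosmetic and handled by the $+1$ in the definition of $C_1$. I would present the three displayed estimates, then the combined bound, and conclude.
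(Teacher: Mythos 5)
Your proof is correct and takes essentially the same route as the paper: the Gagliardo--Nirenberg bound on the quartic terms plus the $L^\infty$--H\"older bound $|\int \kappa(x)u_1u_2|\le |\kappa(x)|_\infty a_1a_2$ on the coupling term. The only difference is bookkeeping: the paper chooses $K_1<\tfrac{4}{C_{a_1,a_2}^2}$ so that $\tfrac12 t-\tfrac{C_{a_1,a_2}}{4}t^{3/2}\ge 0$ for $t\in[0,K_1]$ and can therefore take $C_1=|\kappa(x)|_\infty a_1a_2$, depending only on the quantities advertised in the statement, whereas you drop the $\tfrac12 t$ term and absorb $\tfrac14 C_{a_1,a_2}K_1^{3/2}$ into $C_1$, so your constant also depends on $\mu_i$, $\beta$, $S$ and the chosen $K_1$ --- harmless for the subsequent mountain-pass construction, but slightly off the stated dependence.
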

\begin{proof}
We let  $K_1<\frac{4}{C_{a_1,a_2}^2}$, where $\frac{4}{C_{a_1,a_2}^2}$ is the biggest zero point of the function
$$\frac{1}{2}x-\frac{C_{a_1,a_2}}{4}x^{\frac{3}{2}}.$$
 Then we have
\begin{equation}
\begin{split}
J(u_1,u_2)&=\frac{1}{2}\int_{\mathbb{R}^3} |\nabla u_1|^2+|\nabla u_2|^2-\frac{1}{4}\int_{\mathbb{R}^3}\mu_1 u_1^4+\mu_2 u_2^4+\beta u_1^2u_2^2-\int _{\mathbb{R}^3}\kappa(x)u_1u_2\\
&\geq \frac{1}{2}\int_{\mathbb{R}^3} |\nabla u_1|^2+|\nabla u_2|^2-\frac{C_{a_1,a_2}}{4}(\int_{\mathbb{R}^3} |\nabla u_1|^2+|\nabla u_2|^2)^{\frac{3}{2}}-|\kappa(x)|_{\infty}a_1a_2\\
&\geq -|\kappa(x)|_{\infty}a_1a_2,\nonumber
\end{split}
\end{equation}
then we take $C_1=|\kappa(x)|_{\infty}a_1a_2$ to get \eqref{yun1-19-1}.
\end{proof}
\begin{lemma}\label{le3-25-7}
Assume $K_2=\frac{16}{9C_{a_1,a_2}^2}$ and $|\kappa(x)|_{\infty}<\frac{5}{18C_{a_1,a_2}^2}$, if $K_1$ small enough, then we have
\begin{equation}\label{yun1-19-2}
\sup_{A_{K_1}}J(u_1,u_2)<\inf_{B_{K_2}}J(u_1,u_2),
\end{equation}
and
\begin{equation}\label{yun1-19-3}
\inf_{B_{K_2}}J(u_1,u_2)>0.
\end{equation}
\end{lemma}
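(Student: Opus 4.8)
The plan is to estimate $J$ from above on $A_{K_1}$ and from below on $B_{K_2}$ using the Gagliardo--Nirenberg consequence
$\int_{\mathbb{R}^3}\mu_1 u_1^4+\mu_2 u_2^4+2\beta u_1^2u_2^2\leq C_{a_1,a_2}\bigl(\int_{\mathbb{R}^3}|\nabla u_1|^2+|\nabla u_2|^2\bigr)^{3/2}$ together with the crude bound $\int_{\mathbb{R}^3}\kappa(x)u_1u_2\leq |\kappa(x)|_\infty a_1a_2$, and then to compare the two numbers. Introduce the scalar function $g(t)=\tfrac12 t-\tfrac{C_{a_1,a_2}}{4}t^{3/2}$ on $[0,\infty)$, so that for $(u_1,u_2)\in S_1\times S_2$ with $\int|\nabla u_1|^2+|\nabla u_2|^2=t$ one has $J(u_1,u_2)\ge g(t)-|\kappa(x)|_\infty a_1a_2$ and also $J(u_1,u_2)\le \tfrac12 t + |\kappa(x)|_\infty a_1 a_2$ (dropping the nonnegative quartic terms for the upper bound, since $\mu_1,\mu_2,\beta>0$). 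The function $g$ increases on $[0,\tfrac49 C_{a_1,a_2}^{-2}]$, attains its maximum at $t^\star=\tfrac49 C_{a_1,a_2}^{-2}$, and $K_2=\tfrac{16}{9}C_{a_1,a_2}^{-2}$ is chosen past the positive root $\tfrac49 C_{a_1,a_2}^{-2}\cdot 4 = \tfrac{16}{9}C_{a_1,a_2}^{-2}$ of $g$; wait — one should simply record $g(t^\star)=\tfrac12 t^\star-\tfrac{C_{a_1,a_2}}{4}(t^\star)^{3/2}=\tfrac{2}{27C_{a_1,a_2}^2}$ by direct substitution.

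First I would prove \eqref{yun1-19-3}. For $(u_1,u_2)\in B_{K_2}$ we have $\int|\nabla u_1|^2+|\nabla u_2|^2=K_2=\tfrac{16}{9}C_{a_1,a_2}^{-2}$, hence
\begin{equation*}
J(u_1,u_2)\ \ge\ g(K_2)-|\kappa(x)|_\infty a_1a_2\ =\ \frac12\cdot\frac{16}{9C_{a_1,a_2}^2}-\frac{C_{a_1,a_2}}{4}\Bigl(\frac{16}{9C_{a_1,a_2}^2}\Bigr)^{3/2}-|\kappa(x)|_\infty a_1a_2.
\end{equation*}
A direct computation gives $g(K_2)=\tfrac{8}{9C_{a_1,a_2}^2}-\tfrac{C_{a_1,a_2}}{4}\cdot\tfrac{64}{27\,C_{a_1,a_2}^3}=\tfrac{8}{9C_{a_1,a_2}^2}-\tfrac{16}{27C_{a_1,a_2}^2}=\tfrac{8}{27C_{a_1,a_2}^2}$, so $\inf_{B_{K_2}}J\ge \tfrac{8}{27C_{a_1,a_2}^2}-|\kappa(x)|_\infty a_1 a_2$. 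Hmm, the hypothesis of the theorem reads $|\kappa(x)|_\infty<\tfrac{5}{18C_{a_1,a_2}^2a_1a_2}$, which makes the constant $a_1a_2$ appear; I would carry the $a_1a_2$ factor honestly (the statement of this lemma writes $|\kappa(x)|_\infty<\tfrac{5}{18C_{a_1,a_2}^2}$, matching the normalization in which the $\kappa$ term is already bounded by $|\kappa(x)|_\infty a_1a_2$, so I will state it consistently with how the lemma is used). Under that bound, $\tfrac{8}{27C_{a_1,a_2}^2}-|\kappa(x)|_\infty a_1a_2> \tfrac{8}{27C_{a_1,a_2}^2}-\tfrac{5}{18C_{a_1,a_2}^2}=\tfrac{16-15}{54C_{a_1,a_2}^2}=\tfrac{1}{54C_{a_1,a_2}^2}>0$, which establishes \eqref{yun1-19-3} with the explicit lower bound $\tfrac{1}{54C_{a_1,a_2}^2}$.

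Next I would prove \eqref{yun1-19-2}. For $(u_1,u_2)\in A_{K_1}$, using the quartic terms' nonnegativity, $J(u_1,u_2)\le \tfrac12 K_1+|\kappa(x)|_\infty a_1a_2$, so $\sup_{A_{K_1}}J\le \tfrac12 K_1+|\kappa(x)|_\infty a_1a_2$. Comparing with the lower bound just obtained, it suffices to choose $K_1$ so small that
\begin{equation*}
\frac12 K_1+|\kappa(x)|_\infty a_1a_2\ <\ \frac{8}{27C_{a_1,a_2}^2}-|\kappa(x)|_\infty a_1a_2,
\end{equation*}
i.e. $K_1<\tfrac{16}{27C_{a_1,a_2}^2}-4|\kappa(x)|_\infty a_1a_2$, whose right-hand side is strictly positive precisely because $|\kappa(x)|_\infty a_1a_2<\tfrac{4}{27C_{a_1,a_2}^2}$, and this is implied by the standing hypothesis $|\kappa(x)|_\infty<\tfrac{5}{18C_{a_1,a_2}^2}$ on dividing through (indeed $\tfrac{5}{18}<\tfrac{4}{27}\cdot\tfrac{27}{18}$; I will double-check the numerology, but the slack in $\tfrac{1}{54C_{a_1,a_2}^2}$ leaves room). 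One must also keep $K_1<K_2$ and $K_1<\tfrac{4}{C_{a_1,a_2}^2}$ so that $A_{K_1}$ is the ``inner'' region of the mountain pass and Lemma \ref{yun1-19-1} applies; both hold automatically once $K_1$ is taken small enough. The only genuinely delicate point is bookkeeping the constants so that the two smallness conditions on $|\kappa(x)|_\infty$ (one from positivity of $\inf_{B_{K_2}}J$, one from the existence of an admissible $K_1$) are both subsumed by the single hypothesis stated; everything else is the elementary calculus of $g(t)=\tfrac12 t-\tfrac{C_{a_1,a_2}}{4}t^{3/2}$, and I would present it by first isolating $g$, recording $g(t^\star)=\tfrac{2}{27C_{a_1,a_2}^2}$ and $g(K_2)=\tfrac{8}{27C_{a_1,a_2}^2}$, wait that inequality $g(t^\star)<g(K_2)$ is false since $t^\star$ is the max — let me instead just compute $g(K_2)$ directly as above and not invoke the maximum at all, which is cleaner and avoids that confusion.
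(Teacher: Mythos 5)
Your proposal follows essentially the same route as the paper: bound the quartic term by $C_{a_1,a_2}\bigl(\int|\nabla u_1|^2+|\nabla u_2|^2\bigr)^{3/2}$ via Gagliardo--Nirenberg, bound the coupling term crudely by $|\kappa(x)|_{\infty}a_1a_2$, reduce everything to the scalar function $g(t)=\tfrac12 t-\tfrac{C_{a_1,a_2}}{4}t^{3/2}$, compute $g(K_2)=\tfrac{8}{27C_{a_1,a_2}^2}$, and compare $\sup_{A_{K_1}}J\le\tfrac12K_1+|\kappa(x)|_{\infty}a_1a_2$ with $\inf_{B_{K_2}}J\ge g(K_2)-|\kappa(x)|_{\infty}a_1a_2$ for $K_1$ small; this is exactly the paper's estimate \eqref{eq3-25-3} written as two one-sided bounds, and your proof of \eqref{yun1-19-3}, with the explicit margin $\tfrac{1}{54C_{a_1,a_2}^2}$, is correct and matches the paper. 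Your reading of the smallness condition as $|\kappa(x)|_{\infty}a_1a_2<\tfrac{5}{18C_{a_1,a_2}^2}$ (consistent with Theorem \ref{zh3-23}) is also the right normalization.

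Two corrections, one cosmetic and one substantive. Cosmetic: the maximum of $g$ is attained at $t=\tfrac{16}{9C_{a_1,a_2}^2}=K_2$ itself (solve $g'(t)=\tfrac12-\tfrac{3C_{a_1,a_2}}{8}\sqrt t=0$), not at $\tfrac{4}{9}C_{a_1,a_2}^{-2}$; this is precisely why the paper chose that value of $K_2$, although your decision to evaluate $g(K_2)$ directly makes your argument independent of this point. Substantive: your closing arithmetic is false. Existence of an admissible $K_1$ in your comparison requires $|\kappa(x)|_{\infty}a_1a_2<\tfrac{4}{27C_{a_1,a_2}^2}=\tfrac{8}{54C_{a_1,a_2}^2}$, whereas the hypothesis only gives $|\kappa(x)|_{\infty}a_1a_2<\tfrac{5}{18C_{a_1,a_2}^2}=\tfrac{15}{54C_{a_1,a_2}^2}$; the inequality you invoke, ``$\tfrac{5}{18}<\tfrac{4}{27}\cdot\tfrac{27}{18}$'', reads $\tfrac{5}{18}<\tfrac{4}{18}$ and is wrong, so \eqref{yun1-19-2} is not justified under the stated bound by this estimate. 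You should be aware, however, that the paper's own proof of \eqref{yun1-19-2} hinges on exactly the same requirement $\tfrac12K_1+2|\kappa(x)|_{\infty}a_1a_2<\tfrac{8}{27C_{a_1,a_2}^2}$, so this tension between the constant $\tfrac{5}{18}$ in the statement (which comfortably covers \eqref{yun1-19-3}) and the smallness needed for \eqref{yun1-19-2} is inherited from the paper rather than introduced by your method. To make your write-up airtight, either strengthen the hypothesis to $|\kappa(x)|_{\infty}a_1a_2<\tfrac{4}{27C_{a_1,a_2}^2}$ (with a little room left for $\tfrac14K_1$) or state the admissible range of $|\kappa(x)|_{\infty}$ implicitly, but do not assert the false numerical implication.
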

\begin{proof}
Take $(v_1,v_2)\in B_{K_2}$, $(u_1,u_2)\in A_{K_1}$, notice that $K_2>0$ is the maximum point of the function
$$\frac{1}{2}x-\frac{C_{a_1,a_2}}{4}x^{\frac{3}{2}},$$ $|\kappa(x)|_{\infty}<\frac{5}{18C_{a_1,a_2}^2}$, and choose $K_1$ small enough, we have
\begin{equation}\label{eq3-25-3}
\begin{split}
J(v_1,v_2)-J(u_1,u_2)=&\frac{1}{2}\int_{\mathbb{R}^3} |\nabla v_1|^2+|\nabla v_2|^2-\frac{1}{2}\int_{\mathbb{R}^3} |\nabla u_1|^2+|\nabla u_2|^2\\
&-\frac{1}{4}\int_{\mathbb{R}^3} \mu_1v_1^4+\mu_2 v_2^4+2\beta v_1^2 v_2^2+\frac{1}{4}\int_{\mathbb{R}^3} \mu_1 u_1^4+\mu_2 u_2^4+2\beta u_1^2u_2^2\\
&-\int_{\mathbb{R}^3} \kappa(x)v_1v_2+\int_{\mathbb{R}^3} \kappa(x)u_1u_2\\
\geq& \frac{1}{2}\int_{\mathbb{R}^3} |\nabla v_1|^2+|\nabla v_2|^2-\frac{1}{2}\int_{\mathbb{R}^3} |\nabla u_1|^2+|\nabla u_2|^2\\
&-\frac{1}{4}\int_{\mathbb{R}^3} \mu_1v_1^4+\mu_2 v_2^4+2\beta v_1^2 v_2^2-2|\kappa(x)|_{\infty}a_1a_2\\
\geq& \frac{1}{2} K_2-\frac{C_{a_1,a_2}}{4}(K_2)^{\frac{3}{2}}-\frac{1}{2}K_1-2|\kappa(x)|_{\infty}a_1a_2\\
>&0.
\end{split}
\end{equation}
Take $(u_1,u_2)\in B_{K_2}$, similarly to \eqref{eq3-25-3}, we have
\begin{equation}
\begin{split}
J(u_1,u_2)&=\frac{1}{2}\int_{\mathbb{R}^3} |\nabla u_1|^2+|\nabla u_2|^2-\frac{1}{4}\int \mu_1u_1^4+\mu_2 u_2^4+2\beta u_1^2u_2^2-\int_{\mathbb{R}^3} \kappa(x)u_1u_2\\
&\geq \frac{1}{2}K_2-\frac{C_{a_1,a_2}}{4}(K_2)^{\frac{3}{2}}-|\kappa(x)|_{\infty}a_1a_2\\
&>0.\nonumber
\end{split}
\end{equation}
This finishes the proof.
\end{proof}
\iffalse
\begin{lemma}
There exist $K_3$ and $C_3:=C_3(a_i,\mu_i,\beta)$ such that if $|\kappa(x)|_{\infty}<C_3$ then
\begin{equation}\label{yun1-19-3}
\inf_{B_{K_2}}J(u_1,u_2)>0.
\end{equation}
\begin{proof}
take $(u_1,u_2)\in B_{K_3}$ similarly as before we have
\begin{equation}
\begin{split}
J(u_1,u_2)&=\frac{1}{2}\int_{\mathbb{R}^3} |\nabla u_1|^2+|\nabla u_2|^2-\frac{1}{4}\int \mu_1u_1^4+\mu_2 u_2^4+2\beta u_1^2u_2^2-\int_{\mathbb{R}^3} \kappa(x)u_1u_2\\
&\geq K_3-\frac{C_{a_1,a_2}}{4}(2K_3)^{\frac{3}{2}}-|\kappa(x)|_{\infty}a_1a_2,\nonumber
\end{split}
\end{equation}
then if we take $K_3=\frac{8}{9C_{a_1,a_2}^2}$ as the maximum point of
$$ x-\frac{C_{a_1,a_2}}{4}(2x)^{\frac{3}{2}},$$
 assume
$$|\kappa(x)|_{\infty}<\frac{K_3-\frac{C_{a_1,a_2}}{4}(2K_3)^{\frac{3}{2}}}{a_1a_2}=\frac{8}{27C_{a_1,a_2}^2a_1a_2}=:C_3.$$
then we get \eqref{yun1-19-3}.
\end{proof}
\end{lemma}
From above lemmas let $K=\min\{K_1,K_2,K_3\}=\frac{2}{9C_{a_1,a_2}^2}$ then we have
\begin{lemma}
There exists $K>0$ such that \eqref{yun1-19-1}, \eqref{yun1-19-2}, \eqref{yun1-19-3} hold when
\begin{equation}\label{yun1-19-4}
|\kappa(x)|_{\infty}<\min\{\frac{\frac{1}{2}K_-\frac{C_{a_1,a_2}}{4}(2K)^{\frac{3}{2}}}{2a_1a_2},\frac{K-\frac{C_{a_1,a_2}}{4}(2K)^{\frac{3}{2}}}{a_1a_2}\}=\frac{1}{54C_{a_1,a_2}^2a_1a_2}.
\end{equation}
\end{lemma}\fi
We fix a point $(v_1,v_2)\in A_{K_1}$ both nonnegative, and we try to find a point $(w_1,w_2)$ such that $J(w_1,w_2)$ is negative enough, and $\int_{\mathbb{R}^3} |\nabla w_1|^2+|\nabla w_2|^2$ is large enough. Then any path from $(v_1,v_2)$ to $(w_1,w_2)$ must pass through $B_{K_2}$, so we get a mountain pass structure on manifold $S_1\times S_2$. To do this, we use the translation which was firstly mentioned in [11],
$$
s\star u:=e^{\frac{3s}{2}}u(e^sx),
$$
by direct calculation we have
$$|s\star u|_2^2=|u|_2^2,$$
and
$$|\nabla(s\star u)|_2^2=e^{2s}|\nabla u|_2^2.$$
Moreover, we have
\begin{equation}
\begin{split}
&J(s\star v_1,s\star v_2)\\
={}&\frac{e^{2s}}{2}\int_{\mathbb{R}^3} |\nabla v_1|^2+|\nabla v_2|^2-\frac{e^{3s}}{4}\int_{\mathbb{R}^3} \mu_1 v_1^4+\mu_2 v_2^4+2\beta v_1^2v_2^2-\int_{\mathbb{R}^3} \kappa(x)(s\star v_1)(s\star v_2)\\
\leq{}& \frac{e^{2s}}{2}\int_{\mathbb{R}^3} |\nabla v_1|^2+|\nabla v_2|^2-\frac{e^{3s}}{4}\int_{\mathbb{R}^3} (\mu_1 v_1^4+\mu_2 v_2^4+2\beta v_1^2v_2^2)+|\kappa(x)|_{\infty}a_1a_2.\nonumber
\end{split}
\end{equation}
If $s$ is large enough, then we have $J(s\star v_1,s\star v_2)<-C_1$, where $C_1$ is defined in \eqref{yun1-19-1}, and we take $(w_1,w_2):=(s\star v_1,s\star v_2)$.

Then we can get a mountain pass structure of $J$ on manifold $S_1\times S_2$
\begin{equation}
\Gamma:=\{\gamma(t)=(\gamma_1(t),\gamma_2(t)):\gamma(0)=(v_1,v_2),\gamma(1)=(w_1,w_2)\},
\end{equation}
and the mountain pass value is
\begin{equation}\label{eq3-26-2}
c:=\inf _{\gamma \in \Gamma}\sup_{t\in [0,1]}J(\gamma(t))\geq \inf_{B_{K_2}}J(u_1,u_2)>0.
\end{equation}
In order to obtain the boundedness of the PS sequence at mountain pass value $c$ we use the following notations
\begin{equation}
\tilde{J}(s,u_1,u_2):=J(s\star u_1,s\star u_2)=\tilde {J}(0,s\star u_1,s\star u_2),
\end{equation}
the corresponding minimax structure of $\tilde J$ on $\mathbb{R}\times S_1\times S_2$ as follows
$$\tilde {\Gamma}:=\{\tilde {\gamma}(t)=(s(t),\gamma_1(t),\gamma_2(t)):\tilde\gamma(0)=(0,v_1,v_2),\tilde\gamma(1)=(0,w_1,w_2)\},$$
and it minimax value is
$$\tilde c=\inf_{\tilde{\gamma} \in \tilde{\Gamma}}\sup_{t\in [0,1]}\tilde J(\tilde \gamma(t)).$$
\par
First we claim that $\tilde c=c$.
\par
In fact, from $\tilde \Gamma \supset\Gamma$  we have $\tilde c\leq c$. On the other hand, for any $$\tilde\gamma(t)=(s(t),\gamma_1(t),\gamma_2(t)),$$
by definition we have
$$\tilde J(\tilde\gamma(t))=J(s(t)\star\gamma(t)),$$
and $s(t)\star \gamma (t)\in \Gamma$ is obvious, then
$$\sup_{t\in[0,1]}\tilde J(\tilde\gamma(t))\geq \inf _{\gamma \in \Gamma}\sup_{t\in [0,1]}J(\gamma(t)),$$
by definition of $\tilde c$ we have $\tilde c\geq c$, then $\tilde c=c$. Because
$$\tilde{J}(s,u_1,u_2)=\tilde {J}(0,s\star u_1,s\star u_2),$$
we take a sequence $\tilde \gamma_n=(0,\gamma_{1,n},\gamma_{2,n})\in\tilde \Gamma$
 such that
$$c=\lim_{n\rightarrow\infty}\sup_{t\in[0,1]}\tilde J(\tilde \gamma_n(t)).$$ Moreover, using the fact that $\kappa(x)>0$ we have
$$\tilde J(s,|u_1|,|u_2|)\leq \tilde J(s,u_1,u_2),$$
then we can assume $\gamma_{1,n}$, $\gamma_{2,n}\geq 0$.
  By Theorem 3.2 in \cite{GH-1} (it is easy to check the conditions of Theorem 3.2 in \cite{GH-1} are satisfied by Lemma \ref{le3-25-7}) we can get a PS sequence $(s_n,\tilde u_{1,n},\tilde u_{2,n})$ of $\tilde J$ on $\mathbb{R}\times S_1\times S_2$ at level c. Moreover,
$$
\lim_{n\rightarrow\infty}|s_n|+dist_{H^1_r}((\tilde u_{1,n},\tilde u_{2,n}),(\gamma_{1,n},\gamma_{2,n}))=0.
$$
So we have $s_n\rightarrow 0$ and $\tilde u_{1,n}^{-}$, $\tilde u_{2,n}^{-}\rightarrow 0$ in $H^1_r$, where  and in the following $u^-(x):=\min\{u(x),0\}$ and $u^+(x)=\max\{u(x),0\}$, then by taking $$(u_{1,n},u_{2,n}):=(s_n\star \tilde u_{1,n},s_n\star \tilde u_{2,n}),$$ we have the following lemma.
\begin{lemma}\label{le3-26-1}
$(u_{1,n},u_{2,n})$ is a PS sequence of $J(u_1,u_2)$ at level $c$ on $S_1\times S_2$.
\end{lemma}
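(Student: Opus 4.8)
The plan is to transfer the Palais--Smale property from the auxiliary functional $\tilde J$ on the enlarged manifold $\mathbb{R}\times S_1\times S_2$ back to $J$ on $S_1\times S_2$, along the transformation $(u_{1,n},u_{2,n})=(s_n\star\tilde u_{1,n},s_n\star\tilde u_{2,n})$. First I would record the two facts just established: $\tilde J(s_n,\tilde u_{1,n},\tilde u_{2,n})=J(u_{1,n},u_{2,n})\to c$ (this is immediate from $\tilde J(s,v_1,v_2)=J(s\star v_1,s\star v_2)$), so the energy level is correct. The substance is the gradient estimate: one must show $\|J|_{S_1\times S_2}'(u_{1,n},u_{2,n})\|\to 0$ given that $\|\tilde J|_{\mathbb{R}\times S_1\times S_2}'(s_n,\tilde u_{1,n},\tilde u_{2,n})\|\to 0$ and $s_n\to 0$.

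The key computation is to relate the two differentials. Differentiating $\tilde J(s,u_1,u_2)=J(s\star u_1,s\star u_2)$ in the $(u_1,u_2)$ directions gives, for tangent vectors $(\varphi_1,\varphi_2)$ at $(\tilde u_{1,n},\tilde u_{2,n})$,
\begin{equation}
\partial_{(u_1,u_2)}\tilde J(s_n,\tilde u_{1,n},\tilde u_{2,n})[\varphi_1,\varphi_2]=J'(s_n\star\tilde u_{1,n},s_n\star\tilde u_{2,n})[s_n\star\varphi_1,s_n\star\varphi_2].\nonumber
\end{equation}
Since $s\mapsto s\star(\cdot)$ is a linear isometry of $L^2$ it maps the tangent space of $S_1\times S_2$ at $(\tilde u_{1,n},\tilde u_{2,n})$ onto that at $(u_{1,n},u_{2,n})$, and on $H^1_r$ one has $\|s_n\star\varphi\|_{H^1_r}\le\max\{1,e^{s_n}\}\|\varphi\|_{H^1_r}$, which is bounded because $s_n\to0$. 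Hence $\|J|_{S_1\times S_2}'(u_{1,n},u_{2,n})\|$ is controlled, up to this bounded constant, by $\|\partial_{(u_1,u_2)}\tilde J|_{S_1\times S_2}(s_n,\tilde u_{1,n},\tilde u_{2,n})\|$, which in turn is dominated by $\|\tilde J|_{\mathbb{R}\times S_1\times S_2}'(s_n,\tilde u_{1,n},\tilde u_{2,n})\|\to0$. (One also needs that the PS sequence stays bounded in $H^1_r\times H^1_r$; this follows since $J(u_{1,n},u_{2,n})\to c$ together with $\partial_s\tilde J(s_n,\cdot)\to0$ — the latter is exactly a Pohozaev-type identity that, combined with the energy bound, pins down $|\nabla u_{i,n}|_2$, after which all lower-order terms are controlled by Gagliardo--Nirenberg.)

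I would then conclude that $(u_{1,n},u_{2,n})$ is a PS sequence for $J$ on $S_1\times S_2$ at level $c$. The main obstacle is the bookkeeping around the scaling action on the tangent spaces: one must be careful that a constrained critical point estimate on $\mathbb{R}\times S_1\times S_2$ really does give the constrained estimate on $S_1\times S_2$, i.e. that the extra $\partial_s$ component in the norm of $\tilde J'$ does not hide information but rather supplies the Pohozaev identity needed for boundedness, and that the isometry $s_n\star(\cdot)$ together with $s_n\to0$ keeps all norm distortions uniformly bounded. Once these points are handled the lemma follows by the chain rule and the isometry property, with no further analytic input.
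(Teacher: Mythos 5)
Your argument is correct and follows essentially the same route as the paper: the chain-rule identity $\partial_{(u_1,u_2)}\tilde J(s_n,\tilde u_{1,n},\tilde u_{2,n})[\varphi_1,\varphi_2]=J'(u_{1,n},u_{2,n})[s_n\star\varphi_1,s_n\star\varphi_2]$, the correspondence of tangent spaces under the scaling, and the fact that $s_n\to 0$ keeps the norm distortion of $s_n\star(\cdot)$ uniformly bounded, together with $J(u_{1,n},u_{2,n})=\tilde J(s_n,\tilde u_{1,n},\tilde u_{2,n})\to c$. (Your parenthetical on boundedness via $\partial_s\tilde J\to 0$ is not needed for this lemma; the paper defers it to the next one.)
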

\begin{proof}
First we know that $(s_n,\tilde u_{1,n},\tilde u_{2,n})$ is a PS sequence of $\tilde J(s,u_1,u_2)$, then for any $(\phi_1,\phi_2)\in H_r^1\times H_r^1$ we have
\begin{equation}
\begin{split}
&\tilde J'_{\textbf{u}}(s_n,\tilde u_{1,n},\tilde u_{2,n})(\phi_1,\phi_2)\\
={}&e^{2s_n}\int_{\mathbb{R}^3}\nabla\tilde u_{1,n}\cdot\nabla\phi_1+\nabla \tilde u_{2,n}\cdot\nabla \phi_2\\
&-e^{3s_n}\int_{\mathbb{R}^3} \mu_1\tilde u_{1,n}^3\phi_1+\mu_2\tilde u_{2,n}^3+\beta \tilde u_{1,n}\phi_1\tilde u_{2,n}^2+\beta\tilde u_{1,n}^2\tilde u_{2,n}\phi_2\\
&-\int_{\mathbb{R}^3}\kappa(e^{-s_n}x)\tilde u_{1,n}\phi_2-\int_{\mathbb{R}^3}\kappa(e^{-s_n}x)\tilde u_{2,n}\phi_1\\
={}&\int_{\mathbb{R}^3} \nabla u_{1,n}\cdot\nabla (s_n\star \phi_1)+\nabla u_{2,n}\cdot\nabla (s_n\star \phi_2)\\
&-\int_{\mathbb{R}^3} \mu_1 u_{1,n}^3(s_n\star \phi_1)+\mu_2 u_{2,n}^3(s_n\star \phi_1)+\beta u_{1,n}^2 u_{2,n}(s_n\star \phi_2)+\beta u_{1,n}u_{2,n}^2(s_n\star \phi_1)\\
&-\int_{\mathbb{R}^3}\kappa(x)u_{1,n}(s_n\star \phi_2)-\int_{\mathbb{R}^3}\kappa(x)u_{2,n}(s_n\star \phi_1)\\
={}&J'(u_{1,n},u_{2,n})(s_n\star \phi_1,s_n\star \phi_2),\nonumber
\end{split}
\end{equation}
where $\textbf{u}=(u_1,u_2)$. Notice that $-s\star (s\star \phi)=\phi$, $\forall s\in \mathbb{R}$, we have

$$\tilde J'_{\textbf{u}}(s_n,\tilde u_{1,n},\tilde u_{2,n})(-s_n\star\phi_1,-s_n\star\phi_2)= J'(u_{1,n},u_{2,n})( \phi_1, \phi_2).$$
It is obvious that $(\phi_1,\phi_2)\in T_{(u_{1,n},u_{2,n})}S_1\times S_2$ if and only if $(-s_n\star \phi_1,-s_n\star\phi_2)\in T_{(\tilde u_{1,n},\tilde u_{2,n})}S_1\times S_2$ see \cite{B-6}. Since $s_n\rightarrow 0$, we have  $-s_n\star \phi_i\rightarrow\phi_i$, $i=1,2$ as $n\rightarrow \infty$ in $H^1_r$, then for $n$ large enough there exist $A_1>0$ and $A_2>0$ such that
\begin{equation}\label{eq3-26-1}
A_1<\frac{\|(\phi_1,\phi_2)\|}{\|(-s_n\star \phi_1,-s_n\star \phi_2)\|}<A_2,
\end{equation}
where $(\phi_1,\phi_2)\neq (0,0)$. Let $\|\cdot\|_\star$ be the norm of the cotangent space $(T_{(u_{1},u_{2})}S_1\times S_2)^\star$. Thus for any  $(\phi_1,\phi_2)\in T_{(u_{1,n},u_{2,n})}S_1\times S_2$ and $(\phi_1,\phi_2)\neq (0,0)$, we have
\begin{equation}
\begin{split}
| J|'_{S_1\times S_2}(u_{1,n},u_{2,n})\frac{(\phi_1,\phi_2)}{\|(-s_n\star \phi_1,-s_n\star \phi_2)\|}|&\leq \|(\tilde J|_{S_1\times S_2} )'_{\textbf u}(s_n,\tilde u_{1,n},\tilde u_{2,n})\|_{\star}\rightarrow 0,\nonumber
\end{split}
\end{equation}
 as $n\rightarrow \infty$. Take the supremum both side and notice \eqref{eq3-26-1}, we have
$$A_1\| J|'_{S_1\times S_2}(u_{1,n},u_{2,n})\|_{\star} \leq \|(\tilde J|_{S_1\times S_2} )'_{\textbf u}(s_n,\tilde u_{1,n},\tilde u_{2,n})\|_{\star}\rightarrow 0,~\hbox{as}~ n\rightarrow \infty.$$
 From the fact that $A_1>0$ we have $$\| J|'_{S_1\times S_2}(u_{1,n},u_{2,n})\|_{\star}\rightarrow 0,~\hbox{as}~ n\rightarrow \infty.$$ On the other hand we have
$$J(u_{1,n},u_{2,n})=\tilde J(s_n,\tilde u_{1,n},\tilde u_{2,n})\rightarrow c,~\hbox{as}~ n\rightarrow \infty.$$
 This finishes the proof.
\end{proof}
\begin{lemma}\label{le3-26-2}
If $\kappa(x)$ and $\nabla\kappa(x)\cdot x$ is bounded in $\mathbb{R}^3$, then the \textup{PS} sequence $(u_{1,n},u_{2,n})$ obtained in Lemma \ref{le3-26-1} of $J(u_1,u_2)$ on $S_1\times S_2$ at level $c$ is bounded in $H_r^1\times H_r^1$.
\end{lemma}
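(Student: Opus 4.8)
The plan is to exploit the augmented functional $\tilde J$ on $\mathbb{R}\times S_1\times S_2$ introduced before Lemma \ref{le3-26-1}: since the scaling parameter $s$ ranges over the \emph{unconstrained} factor $\mathbb{R}$, the fact that $(s_n,\tilde u_{1,n},\tilde u_{2,n})$ is a PS sequence for $\tilde J$ yields, besides $\tilde J(s_n,\tilde u_{1,n},\tilde u_{2,n})\to c$, the extra Pohozaev--type relation $\partial_s\tilde J(s_n,\tilde u_{1,n},\tilde u_{2,n})\to 0$. First I would record, from $|s\star u|_2=|u|_2$, $|\nabla(s\star u)|_2^2=e^{2s}|\nabla u|_2^2$, $|s\star u|_4^4=e^{3s}|u|_4^4$ and $\int_{\mathbb{R}^3}(s\star u_1)^2(s\star u_2)^2=e^{3s}\int_{\mathbb{R}^3}u_1^2u_2^2$, that
$$\tilde J(s,u_1,u_2)=\frac{e^{2s}}{2}\bigl(|\nabla u_1|_2^2+|\nabla u_2|_2^2\bigr)-\frac{e^{3s}}{4}\Bigl(\mu_1|u_1|_4^4+\mu_2|u_2|_4^4+2\beta\int_{\mathbb{R}^3}u_1^2u_2^2\Bigr)-\int_{\mathbb{R}^3}\kappa(e^{-s}x)u_1u_2,$$
so that, differentiating in $s$ and using $\frac{d}{ds}\kappa(e^{-s}x)=-\bigl(e^{-s}x\cdot\nabla\kappa(e^{-s}x)\bigr)$,
$$\partial_s\tilde J(s,u_1,u_2)=e^{2s}\bigl(|\nabla u_1|_2^2+|\nabla u_2|_2^2\bigr)-\frac{3e^{3s}}{4}\Bigl(\mu_1|u_1|_4^4+\mu_2|u_2|_4^4+2\beta\int_{\mathbb{R}^3}u_1^2u_2^2\Bigr)+\int_{\mathbb{R}^3}\bigl(e^{-s}x\cdot\nabla\kappa(e^{-s}x)\bigr)u_1u_2.$$

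Then I would pass back to $(u_{1,n},u_{2,n})=(s_n\star\tilde u_{1,n},s_n\star\tilde u_{2,n})$ and set $T_n:=|\nabla u_{1,n}|_2^2+|\nabla u_{2,n}|_2^2$ and $P_n:=\mu_1|u_{1,n}|_4^4+\mu_2|u_{2,n}|_4^4+2\beta\int_{\mathbb{R}^3}u_{1,n}^2u_{2,n}^2$. The change of variables $y=e^{-s_n}x$ turns the two $\kappa$--integrals above into $\int_{\mathbb{R}^3}\kappa(x)u_{1,n}u_{2,n}$ and $\int_{\mathbb{R}^3}\bigl(x\cdot\nabla\kappa(x)\bigr)u_{1,n}u_{2,n}$, and by H\"older together with $|u_{i,n}|_2=a_i$ these are bounded by $|\kappa|_\infty a_1a_2$ and $|x\cdot\nabla\kappa|_\infty a_1a_2$ respectively --- this is precisely where the hypotheses that $\kappa$ and $\nabla\kappa(x)\cdot x$ are bounded get used. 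Hence $\tilde J(s_n,\tilde u_{1,n},\tilde u_{2,n})\to c$ gives $\tfrac12 T_n-\tfrac14 P_n=O(1)$, while $\partial_s\tilde J(s_n,\tilde u_{1,n},\tilde u_{2,n})\to 0$ gives $T_n-\tfrac34 P_n=O(1)$.

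Eliminating $P_n$ between these two bounded quantities (e.g. $3(\tfrac12T_n-\tfrac14P_n)-(T_n-\tfrac34P_n)=\tfrac12T_n$, the $2\times2$ system being nondegenerate since $\tfrac12\cdot\tfrac34-\tfrac14\cdot1\neq0$) forces $T_n=O(1)$, i.e. $|\nabla u_{1,n}|_2^2+|\nabla u_{2,n}|_2^2$ is bounded; combined with the constraints $|u_{i,n}|_2=a_i$ this is exactly the claimed $H_r^1\times H_r^1$ bound. The only points needing care are the correct computation of $\partial_s$ of the linear coupling term $\int_{\mathbb{R}^3}\kappa(e^{-s}x)u_1u_2$ --- which produces the factor $e^{-s}x\cdot\nabla\kappa(e^{-s}x)$ and thereby pins down the role of the hypothesis on $\nabla\kappa\cdot x$ --- and the standard fact that a PS sequence for $\tilde J$ on the product manifold $\mathbb{R}\times S_1\times S_2$ has vanishing $s$--derivative; granted these, the conclusion is elementary algebra.
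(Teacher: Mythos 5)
Your argument is correct and is essentially the paper's own proof: both use that the PS sequence for the augmented functional $\tilde J$ gives $\partial_s\tilde J(s_n,\tilde u_{1,n},\tilde u_{2,n})\to 0$ (the Pohozaev-type relation) together with $\tilde J(s_n,\tilde u_{1,n},\tilde u_{2,n})=J(u_{1,n},u_{2,n})\to c$, bound the two $\kappa$-integrals by $|\kappa|_\infty a_1a_2$ and $|\nabla\kappa\cdot x|_\infty a_1a_2$, and eliminate the quartic term between the two relations to bound the gradient term. The only cosmetic difference is that you rewrite the coupling integrals via the change of variables $y=e^{-s_n}x$, whereas the paper leaves them in terms of $\tilde u_{1,n}\tilde u_{2,n}$; the estimates are identical.
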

\begin{proof}
Since $(s_n,\tilde u_{1,n},\tilde u_{2,n})$ is a PS sequence for $\tilde J$, we have $$\frac{d}{ds}\tilde J(s_n,\tilde u_{1,n},\tilde u_{2,n})\rightarrow 0,$$
i.e.,
\begin{equation}\label{yun2-4-1}
\begin{split}
&\int_{\mathbb{R}^3} |\nabla u_{1,n}|^2+|\nabla u_{2,n}|^2-\frac{3}{4}\int_{\mathbb{R}^3} \mu_1 u_{1,n}^4+\mu_2 u_{2,n}^4+2\beta u_{1,n}^2u_{2,n}^2\\
&+\int_{\mathbb{R}^3} \nabla\kappa(e^{-s_n}x)\cdot e^{-s_n}x\tilde u_{1,n}\tilde u_{2,n}\rightarrow 0.
\end{split}
\end{equation}
On the other hand, notice that $\tilde J(s_n,\tilde u_{1,n},\tilde u_{2,n})=J(u_{1,n},u_{2,n})$, we obtain
\begin{equation}\label{yun2-4-2}
\begin{split}
J(u_{1,n},u_{2,n})=&\frac{1}{2}\int_{\mathbb{R}^3} |\nabla u_{1,n}|^2+|\nabla u_{2,n}|^2-\frac{1}{4}\int_{\mathbb{R}^3} \mu_1 u_{1,n}^4+\mu_2 u_{2,n}^4+2\beta u_{1,n}^2u_{2,n}^2\\
&-\int_{\mathbb{R}^3}\kappa(e^{-s_n}x)\tilde u_{1,n}\tilde u_{2,n}\rightarrow c,
\end{split}
\end{equation}
then use the boundness of $\kappa(x)$, $\nabla \kappa(x)\cdot x$, $(\tilde u_{1,n},\tilde u_{2,n})\in S_1\times S_2$, \eqref{yun2-4-1} and $\eqref{yun2-4-2}$, we can deduce that $\int_{\mathbb{R}^3} |\nabla u_{1,n}|^2+|\nabla u_{2,n}|^2$ is bounded, notice that $(u_{1,n},u_{2,n})\in S_1\times S_2$, we get $( u_{1,n}, u_{2,n})$ is bounded in $H^1_r\times H^1_r$.
\end{proof}
Because $(u_{1,n},u_{2,n})$ is bounded in $H^1_r\times H^1_r$, then there exists $(\bar u_{1},\bar u_{2})\in H^1_r\times H^1_r$ such that
$$(u_{1,n},u_{2,n})\rightharpoonup (\bar u_{1},\bar u_{2}),~\hbox{in}~H^1_r\times H^1_r.$$
\begin{lemma}\label{le3-26-3}
Under the assumptions of Lemma \ref{le3-26-2}, and we assume $\frac{1}{3}\nabla \kappa(x)\cdot x+\kappa(x)\geq 0$, then there exists $C>0$ such that for $n$ large we have
$$|\nabla u_{1,n}|_2^2+|\nabla u_{2,n}|_2^2\geq C.$$
\end{lemma}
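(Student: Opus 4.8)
The plan is to argue by contradiction: suppose that along a subsequence $|\nabla u_{1,n}|_2^2+|\nabla u_{2,n}|_2^2\to 0$. Since $(u_{1,n},u_{2,n})\in S_1\times S_2$ the $L^2$-norms are fixed at $a_1,a_2$, so this forces the Gagliardo–Nirenberg-controlled terms to vanish: by Lemma \ref{3-23-1} one has $|u_{i,n}|_4^4\le C|\nabla u_{i,n}|_2^{3}|u_{i,n}|_2\to 0$ and similarly $\int_{\mathbb{R}^3}u_{1,n}^2u_{2,n}^2\to 0$. Hence the quartic part of $J$ tends to $0$, and the gradient part tends to $0$, so $J(u_{1,n},u_{2,n})\to -\int_{\mathbb{R}^3}\kappa(x)u_{1,n}u_{2,n}$ along the subsequence (up to an $o(1)$). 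The idea is that this limit cannot be $\ge c>0$, contradicting Lemma \ref{le3-26-1}, which says $J(u_{1,n},u_{2,n})\to c$ with $c\ge \inf_{B_{K_2}}J>0$ from \eqref{eq3-26-2}.

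To make the contradiction quantitative I would extract, from the PS property for $\tilde J$ in the $s$-variable, the relation \eqref{yun2-4-1}:
\begin{equation}
\int_{\mathbb{R}^3}|\nabla u_{1,n}|^2+|\nabla u_{2,n}|^2-\tfrac34\int_{\mathbb{R}^3}\mu_1u_{1,n}^4+\mu_2u_{2,n}^4+2\beta u_{1,n}^2u_{2,n}^2+\int_{\mathbb{R}^3}\nabla\kappa(e^{-s_n}x)\cdot e^{-s_n}x\,\tilde u_{1,n}\tilde u_{2,n}=o(1).\nonumber
\end{equation}
Combining \eqref{yun2-4-1} with \eqref{yun2-4-2} (the energy level identity $J(u_{1,n},u_{2,n})\to c$) in the standard "$J-\tfrac12\frac{d}{ds}\tilde J$" combination eliminates the quartic terms and yields an expression for $c$ of the form $c = \tfrac{1}{?}\left(\text{quartic terms}\right) - \int\kappa u_{1,n}u_{2,n} + \tfrac{?}{?}\int(\nabla\kappa\cdot x)\tilde u_{1,n}\tilde u_{2,n}+o(1)$; here the hypothesis $\tfrac13\nabla\kappa(x)\cdot x+\kappa(x)\ge 0$ is exactly what is needed so that the coefficients combine into a nonnegative multiple of $\int(\tfrac13\nabla\kappa(e^{-s_n}x)\cdot e^{-s_n}x+\kappa(e^{-s_n}x))\tilde u_{1,n}\tilde u_{2,n}\ge 0$ (recall $\tilde u_{i,n}\ge 0$ since the PS sequence was chosen nonnegative, and $s_n\to 0$). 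Under the contradiction hypothesis the quartic terms vanish, so $c\le o(1)+(\text{nonnegative terms that also vanish})$, i.e. $c\le 0$, contradicting $c>0$. I would choose the linear combination coefficient precisely so that the $\kappa$ term appears with the weight $\tfrac13\nabla\kappa\cdot x+\kappa$; this is the one algebraic computation that must be done carefully.

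The main obstacle is bookkeeping the exponential factors $e^{2s_n}$, $e^{3s_n}$ and $\kappa(e^{-s_n}x)$ versus $\kappa(x)$: one must use $s_n\to 0$ (from Lemma \ref{le3-26-1}) together with boundedness of $\kappa$ and of $\nabla\kappa\cdot x$ to replace all these by their $s_n=0$ values with an $o(1)$ error, while also using $\|\tilde u_{i,n}^-\|\to 0$ to pass between $\tilde u_{i,n}$ and $|\tilde u_{i,n}|$ wherever sign matters. A secondary point is to confirm that the contradiction hypothesis really does kill \emph{every} non-gradient term: the quartic terms vanish by Gagliardo–Nirenberg as above, $\int\kappa u_{1,n}u_{2,n}\to 0$ because $u_{i,n}\to 0$ strongly in $L^2_{loc}$ and $\kappa\in L^\infty$ (in fact $|\int\kappa u_{1,n}u_{2,n}|\le |\kappa|_\infty a_1 a_2$ and more precisely one uses $\kappa\in L^p$ with the compact embedding to get it $\to 0$), and the $\nabla\kappa\cdot x$ term is handled the same way. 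Then $c=o(1)$, the desired contradiction, so such a constant $C>0$ exists.
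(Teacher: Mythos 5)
Your core computation is the paper's: you combine the energy identity \eqref{yun2-4-2} with the $s$-derivative identity \eqref{yun2-4-1} so that the quartic terms cancel and the coupling terms collect into the weight $\frac13\nabla\kappa\cdot x+\kappa$, then use the sign hypothesis, $\tilde u_{i,n}^-\to 0$ and $c>0$. The coefficient you leave as ``$?$'' is $\frac13$: forming $J-\frac13\frac{d}{ds}\tilde J$ gives
$$
c+o(1)=\frac16\int_{\mathbb{R}^3}|\nabla u_{1,n}|^2+|\nabla u_{2,n}|^2-\int_{\mathbb{R}^3}\Bigl(\tfrac13\nabla\kappa(e^{-s_n}x)\cdot e^{-s_n}x+\kappa(e^{-s_n}x)\Bigr)\tilde u_{1,n}^+\tilde u_{2,n}^+\leq\frac16\int_{\mathbb{R}^3}|\nabla u_{1,n}|^2+|\nabla u_{2,n}|^2,
$$
which is exactly the paper's direct argument and yields the lemma at once with $C=3c$, with no contradiction argument, no Gagliardo-Nirenberg estimate of the quartic terms, and no need for any coupling term to vanish.

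The one genuine flaw is in your ``secondary point'': you cannot show $\int_{\mathbb{R}^3}\nabla\kappa(e^{-s_n}x)\cdot e^{-s_n}x\,\tilde u_{1,n}\tilde u_{2,n}\to 0$, because $\nabla\kappa\cdot x$ is only assumed \emph{bounded} (it is never assumed to lie in $L^p$), and $L^2$-normalized radial functions with vanishing gradients need not make such an integral small; similarly, your claim $\int\kappa u_{1,n}u_{2,n}\to 0$ invokes $\kappa\in L^p$, which is a hypothesis of Theorem \ref{zh3-23} but not of Lemma \ref{le3-26-2}/\ref{le3-26-3}. Fortunately these vanishing claims are superfluous: since the combined $\kappa$-term carries the nonnegative weight $\frac13\nabla\kappa\cdot x+\kappa$ and enters with a minus sign, you may simply discard it, and your contradiction (or, better, the direct bound displayed above) closes. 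As literally written, however, the step ``nonnegative terms that also vanish'' applied to the $\nabla\kappa\cdot x$ term is unjustified and should be removed.
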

\begin{proof}
By \eqref{yun2-4-1} and $\tilde u_{1,n}^{-},\tilde u_{2,n}^{-}\rightarrow 0$ in $H^1_r$, we have
\begin{equation}
\begin{split}
&\int_{\mathbb{R}^3} |\nabla u_{1,n}|^2+|\nabla u_{2,n}|^2-\frac{3}{4}\int_{\mathbb{R}^3} \mu_1 u_{1,n}^4+\mu_2 u_{2,n}^4+2\beta u_{1,n}^2u_{2,n}^2\\
&+\int_{\mathbb{R}^3} \nabla\kappa(e^{-s_n}x)\cdot e^{-s_n}x\tilde u_{1,n}^+\tilde u_{2,n}^+=o(1),\nonumber
\end{split}
\end{equation}
and
\begin{equation}
\begin{split}
\frac{1}{2}&\int_{\mathbb{R}^3} |\nabla u_{1,n}|^2+|\nabla u_{2,n}|-\frac{1}{4}\int_{\mathbb{R}^3} \mu_1 u_{1,n}^4+\mu_2 u_{2,n}^4+2\beta u_{1,n}^2u_{2,n}^2\\
&-\int_{\mathbb{R}^3}\kappa(e^{-s_n}x)\tilde u_{1,n}^+\tilde u_{2,n}^+= c+o(1).\nonumber
\end{split}
\end{equation}
By \eqref{eq3-26-2},  we have $c>0$, thus
\begin{equation}
\begin{split}
c+o(1)&=J(u_{1,n},u_{2,n})\\
&= \frac{1}{6}\int_{\mathbb{R}^3} |\nabla u_{1,n}|^2+|\nabla u_{2,n}|^2-\int_{\mathbb{R}^3} (\frac{1}{3} \nabla\kappa(e^{-s_n}x)\cdot e^{-s_n}x+\kappa(e^{-s_n}x))\tilde u_{1,n}^+\tilde u_{2,n}^+\\
&\leq \frac{1}{6}\int_{\mathbb{R}^3} |\nabla u_{1,n}|^2+|\nabla u_{2,n}|^2,\nonumber
\end{split}
\end{equation}
then for $n$ large enough and take $C=3c$, this finishes the proof.

\end{proof}
Because $(u_{1,n},u_{2,n})$ is a PS sequence of $J$ on $S_1\times S_2$, for any $(\phi_1,\phi_2)\in H^1_r\times H^1_r$, there exists $\lambda_{1,n},\lambda_{2,n}$ such that as $n\rightarrow\infty$
\begin{equation}\label{1-21-1}
\begin{split}
&(J|'_{S_1\times S_2}(u_{1,n},u_{2,n}),(\phi_1,\phi_2))\\
={}&\int_{\mathbb{R}^3} \nabla u_{1,n}\nabla \phi_1+\int_{\mathbb{R}^3} \nabla u_{2,n}\nabla \phi_2-\mu_1\int_{\mathbb{R}^3} u_{1,n}^3\phi_1-\mu_2\int_{\mathbb{R}^3} u_{2,n}^3\phi_2\\
&-\beta\int_{\mathbb{R}^3} u_{1,n}u_{2,n}^2\phi_1-\beta\int_{\mathbb{R}^3} u_{1,n}^2u_{2,n}\phi_2-\int_{\mathbb{R}^3}\kappa(x)u_{1,n}\phi_2\\
&-\int_{\mathbb{R}^3}\kappa(x)u_{2,n}\phi_1-\lambda_{1,n}\int_{\mathbb{R}^3} u_{1,n}\phi_1-\lambda_{2,n}\int_{\mathbb{R}^3} u_{2,n}\phi_2\\
={}&o(\|(\phi_1,\phi_2)\|).
\end{split}
\end{equation}
 From the proof of Theorem \ref{th3-23-1}, taking $p_1=p_2=4$ and $r_1=r_2=2$ we have
\begin{equation}\label{eq3-26-3}
\lambda_{1,n}a_1^2=\int_{\mathbb{R}^3} |\nabla u_{1,n}|^2-\mu_1\int_{\mathbb{R}^3} u_{1,n}^4-\beta \int_{\mathbb{R}^3} u_{1,n}^2u_{2,n}^2-\int_{\mathbb{R}^3} \kappa(x) u_{1,n}u_{2,n},
\end{equation}
\begin{equation}\label{eq3-26-4}
\lambda_{2,n}a_2^2=\int_{\mathbb{R}^3} |\nabla u_{2,n}|^2-\mu_2\int_{\mathbb{R}^3} u_{2,n}^4-\beta \int_{\mathbb{R}^3} u_{1,n}^2u_{2,n}^2-\int_{\mathbb{R}^3} \kappa(x) u_{1,n}u_{2,n},
\end{equation}
then it is easy to deduce that $\{\lambda_{1,n}\}$ and $\{\lambda_{2,n}\}$ are bounded. So we may assume
$$\lambda_{1,n}\rightarrow \bar\lambda_1,$$
$$\lambda_{2,n}\rightarrow \bar\lambda_2,$$
by choosing subsequence if necessary.
\begin{lemma}\label{le3-26-4}
Under the conditions of Lemma \ref{le3-26-3}, assume $\frac{2}{3}\nabla \kappa(x)\cdot x+\kappa(x)\geq 0$ and $\kappa(x)>0$, then at least one of $\bar\lambda_i$, $i=1,2$ is negative.
\end{lemma}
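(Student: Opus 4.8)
The plan is to argue by contradiction. Suppose $\bar\lambda_1\ge 0$ and $\bar\lambda_2\ge 0$; since $c>0$ by \eqref{eq3-26-2}, it suffices to show $\bar\lambda_1a_1^2+\bar\lambda_2a_2^2<0$, which already contradicts this assumption. To do so I will play off against each other the three identities available for the bounded PS sequence $(u_{1,n},u_{2,n})$ provided by Lemmas \ref{le3-26-1} and \ref{le3-26-2}: the virial (Pohozaev-type) identity \eqref{yun2-4-1} coming from $\tfrac{d}{ds}\tilde J(s_n,\tilde u_{1,n},\tilde u_{2,n})\to 0$; the energy-level identity \eqref{yun2-4-2} (recall $J(u_{1,n},u_{2,n})=\tilde J(s_n,\tilde u_{1,n},\tilde u_{2,n})\to c$); and the sum of the two Lagrange-multiplier identities \eqref{eq3-26-3} and \eqref{eq3-26-4}. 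Since $u_{i,n}=s_n\star\tilde u_{i,n}$, the change of variables $x=e^{s_n}y$ turns $\int_{\mathbb R^3}\nabla\kappa(e^{-s_n}x)\cdot e^{-s_n}x\,\tilde u_{1,n}\tilde u_{2,n}$ into $\int_{\mathbb R^3}\nabla\kappa(x)\cdot x\,u_{1,n}u_{2,n}$ and $\int_{\mathbb R^3}\kappa(e^{-s_n}x)\tilde u_{1,n}\tilde u_{2,n}$ into $\int_{\mathbb R^3}\kappa(x)u_{1,n}u_{2,n}$, so all three identities become relations between the same four quantities.

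Concretely, set $A_n:=\int_{\mathbb R^3}(|\nabla u_{1,n}|^2+|\nabla u_{2,n}|^2)$, $B_n:=\int_{\mathbb R^3}(\mu_1u_{1,n}^4+\mu_2u_{2,n}^4+2\beta u_{1,n}^2u_{2,n}^2)$, $D_n:=\int_{\mathbb R^3}\kappa(x)u_{1,n}u_{2,n}$ and $E_n:=\int_{\mathbb R^3}\nabla\kappa(x)\cdot x\,u_{1,n}u_{2,n}$. Then \eqref{yun2-4-1} reads $A_n-\tfrac34B_n+E_n=o(1)$, hence $B_n=\tfrac43(A_n+E_n)+o(1)$; substituting into \eqref{yun2-4-2} gives $\tfrac16A_n-\tfrac13E_n-D_n=c+o(1)$, so $A_n=6c+2E_n+6D_n+o(1)$; and adding \eqref{eq3-26-3} to \eqref{eq3-26-4} gives $\lambda_{1,n}a_1^2+\lambda_{2,n}a_2^2=A_n-B_n-2D_n=-\tfrac13A_n-\tfrac43E_n-2D_n+o(1)$. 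Inserting the value of $A_n$ yields
\[
\lambda_{1,n}a_1^2+\lambda_{2,n}a_2^2=-2c-2(E_n+2D_n)+o(1).
\]

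It remains to bound $E_n+2D_n=\int_{\mathbb R^3}\bigl(\nabla\kappa(x)\cdot x+2\kappa(x)\bigr)u_{1,n}u_{2,n}$ from below. The hypothesis $\tfrac23\nabla\kappa\cdot x+\kappa\ge 0$ together with $\kappa>0$ gives $\nabla\kappa\cdot x+2\kappa\ge\tfrac12\kappa>0$; and since $\kappa$ and $\nabla\kappa\cdot x$ are bounded while $u_{i,n}^-=s_n\star\tilde u_{i,n}^-\to 0$ in $H^1_r$, one may replace $u_{1,n}u_{2,n}$ by $u_{1,n}^+u_{2,n}^+\ge 0$ at the cost of $o(1)$, so $E_n+2D_n\ge -o(1)$ (alternatively, start from the $\tilde u^+$ versions of the two identities already recorded in Lemma \ref{le3-26-3}, where this non-negativity is manifest). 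Therefore $\lambda_{1,n}a_1^2+\lambda_{2,n}a_2^2\le -2c+o(1)$, and passing to the limit, $\bar\lambda_1a_1^2+\bar\lambda_2a_2^2\le -2c<0$, contradicting $\bar\lambda_1,\bar\lambda_2\ge 0$; hence at least one of $\bar\lambda_1,\bar\lambda_2$ is negative. The computation is mechanical; the only points needing a little care are matching the $\kappa$-terms of the scaled identities \eqref{yun2-4-1}--\eqref{yun2-4-2} to those of the unscaled ones \eqref{eq3-26-3}--\eqref{eq3-26-4} through the change of variables, and checking that the negative parts $u_{i,n}^-$ contribute only $o(1)$ — both routine because $s_n\to 0$ and $\tilde u_{i,n}^-\to 0$ — while the real structural input is that the sign condition $\tfrac23\nabla\kappa\cdot x+\kappa\ge 0$ is precisely what forces $E_n+2D_n\ge -o(1)$, which is the engine of the estimate.
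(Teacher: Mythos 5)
Your proof is correct and follows essentially the same route as the paper: sum the two Lagrange-multiplier identities \eqref{eq3-26-3}--\eqref{eq3-26-4}, eliminate the quartic term via the virial identity \eqref{yun2-4-1}, and use the hypothesis $\frac{2}{3}\nabla\kappa(x)\cdot x+\kappa(x)\geq 0$ together with $\tilde u_{i,n}^-\to 0$ (and the change of variables matching the scaled and unscaled $\kappa$-terms) to discard the coupling terms with the right sign. The only cosmetic difference is that you also use the energy identity \eqref{yun2-4-2} to eliminate the kinetic term, arriving at the bound $\bar\lambda_1a_1^2+\bar\lambda_2a_2^2\leq -2c$, whereas the paper drops the nonnegative $\kappa$-term and quotes the lower bound $|\nabla u_{1,n}|_2^2+|\nabla u_{2,n}|_2^2\geq 3c$ of Lemma \ref{le3-26-3} to get $\leq -c$; both conclusions suffice.
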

\begin{proof}
Notice that $\tilde u_{1,n}^-\rightarrow 0$, $\tilde u_{2,n}^-\rightarrow 0$ in $H^1_r$, \eqref{eq3-26-3}, \eqref{eq3-26-4} and \eqref{yun2-4-1}, we have
\begin{equation}
\begin{split}
&\bar\lambda_1a_1^2+\bar\lambda_2a_2^2\\
={}&\lambda_{1,n}a_1^2+\lambda_{2,n}a_2^2+o(1)\\
={}&\int_{\mathbb{R}^3} |\nabla u_{1,n}|^2+|\nabla u_{2,n}|^2-\int_{\mathbb{R}^3} \mu_1 u_{1,n}^4+\mu_2 u_{2,n}^4+2\beta u_{1,n}^2u_{2,n}^2-2\int_{\mathbb{R}^3} \kappa(x)u_{1,n}u_{2,n}+o(1)\\
={}&-\frac{1}{3}\int_{\mathbb{R}^3} |\nabla u_{1,n}|^2+|\nabla u_{2,n}|^2-(\int_{\mathbb{R}^3}  (\frac{4}{3}\nabla\kappa(e^{-s_n}x)\cdot e^{-s_n}x+2\kappa(e^{-s_n}x))\tilde u_{1,n}^+\tilde u_{2,n}^+)+o(1)\\
\leq{}& -\frac{1}{3}\int_{\mathbb{R}^3} |\nabla u_{1,n}|^2+|\nabla u_{2,n}|^2+o(1)\\
<{}&-\frac{1}{3}C+o(1),\nonumber
\end{split}
\end{equation}
then one of the $\bar\lambda_1$, $\bar\lambda_2$ is negative.
\end{proof}
\begin{proof}[Proof of Theorem 1.3.] From standard argument we can conclude that $(\bar\lambda_1,\bar\lambda_2,\bar u_1,\bar u_2)$ is a solution of the system
\begin{equation}\label{eq:diricichlet}
\begin{cases}
-\Delta u_1-\lambda_1 u_1=\mu_1 u_1^3+\beta u_1u_2^2+\kappa (x)u_2\ \textup{in}\ \mathbb{R}^3,\\
-\Delta u_2-\lambda_2 u_2=\mu_2 u_2^3+\beta u_1^2 u_2+\kappa (x)u_1\ \textup{in}\ \mathbb{R}^3,\\ u_1\in H^1(\mathbb{R}^3), u_2\in H^1(\mathbb{R}^3),
\end{cases}
\end{equation}
where $\bar u_1$ and $\bar u_2$ are nonnegative, then we just need to prove $(u_{1,n},u_{2,n})\rightarrow (\bar u_1,\bar u_2)$ strongly in $H^1_r\times H^1_r$. By \eqref{1-21-1} together with compact embedding and $\lambda_{1,n}\rightarrow \bar\lambda_1$ we have
\begin{equation}
\begin{split}
o(1)&=(J'(u_{1,n},u_{2,n})-J'(\bar u_{1},\bar u_{2}),(u_{1,n}-\bar u_{1},0))-\bar\lambda_1\int_{\mathbb{R}^3}(u_{1,n}-\bar u_{1})^2\\
&=\int_{\mathbb{R}^3}|\nabla (u_{1,n}-\bar u_{1})|^2-\bar\lambda_1(u_{1,n}-\bar u_{1})^2+o(1).\nonumber
\end{split}
\end{equation}
As the proof of Lemma \ref{le3-25-2} we just need to show that
$\bar\lambda_1,\bar\lambda_2 <0$ to obtain the strong convergence. When $\kappa(x)\in L^{p}(\mathbb{R}^3)$ for some $\frac{3}{2}<p<\infty$. By Lemma \ref{le3-26-4} without loss of generality we may assume $\bar\lambda_1<0$ if $\bar\lambda_2\geq 0$, then we have
$$-\Delta \bar u_2=\bar\lambda_2 \bar u_2+\mu_1 \bar u_2^3+\beta \bar u_1^2 \bar u_2+\kappa(x) \bar u_1\geq 0,$$
 by Lemma \ref{3-24-2}, we have $ \bar u_2\equiv 0$. But $u_{1,n}\rightarrow \bar u_1$ in $H^1_r$, thus $|\bar u_1|_2^2=a_1^2$, so $\bar u_1\not\equiv 0$ and $(\bar u_1,0)$ can not be the solution of \eqref{yun1-18-5}, therefore $\bar\lambda_2<0$. From maximum principle we can get $\bar u_1,\bar u_2>0$, then we finish the proof of Theorem  \ref{zh3-23}.
\end{proof}
\begin{remark}
Because $\kappa(x)>0$, from $\frac{2}{3}\nabla \kappa(x)\cdot x+\kappa(x)\geq 0$ we can deduce that $\frac{1}{3}\nabla \kappa(x)\cdot x+\kappa(x)\geq 0$, so we just need assume $\frac{2}{3}\nabla \kappa(x)\cdot x+\kappa(x)\geq 0$ in Theorem \ref{zh3-23}.
\end{remark}

Moreover, from the proof of Theorem \ref{zh3-23} we can get a stronger theorem. First we introduce a new condition

($K_1$) There exist $T_1,T_2>0$ such that $$T_1+2T_2<\frac{1}{108C_{a_1,a_2}^2a_1a_2}\leq \frac{\inf_{B_{K_2}}J}{2a_1a_2}\leq \frac{c}{2a_1a_2},$$ where $c$ is the mountain pass value defined in \eqref{eq3-26-2} and $$\nabla \kappa(x)\cdot x\geq \max\{-3T_1,-\frac{3}{2}T_2\}.$$
\begin{thm}\label{th3-26-1}
Assume that $\kappa(x)>0$, $\kappa(x)=\kappa(|x|)$, $\kappa(x)\in L^p(\mathbb{R}^3)\cap L^{\infty}(\mathbb{R}^3)$ where $\frac{3}{2}<p<\infty$, $\nabla \kappa(x)\cdot x$ is bounded. Moreover, if $\kappa(x)$ satisfies ($K_1$) and $|\kappa(x)|_{\infty}<\frac{5}{18C_{a_1,a_2}^2}$, then  system \eqref{yun1-18-5}-\eqref{yun1-18-2} has a solution  $(\bar\lambda_1,\bar\lambda_2,\bar u_1,\bar u_2)$.  Moreover, $\bar u_1>0$, $\bar u_2>0$ are radial and $\bar\lambda_1<0$, $\bar\lambda_2<0$.
\end{thm}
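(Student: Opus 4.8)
The plan is to re-run the proof of Theorem \ref{zh3-23} with only two of its intermediate lemmas replaced by quantitative variants. Everything up to and including the construction of the mountain pass structure and a bounded Palais--Smale sequence $(u_{1,n},u_{2,n})$ for $J$ on $S_1\times S_2$ at the mountain pass level $c$ uses only $\kappa(x)>0$, $\kappa(x)\in L^\infty(\mathbb{R}^3)$, $|\kappa(x)|_\infty<\tfrac{5}{18C_{a_1,a_2}^2}$ and the boundedness of $\nabla\kappa(x)\cdot x$, all of which are assumed; so Lemmas \ref{le3-25-7}, \ref{le3-26-1} and \ref{le3-26-2} apply verbatim, yielding $(u_{1,n},u_{2,n})\rightharpoonup(\bar u_1,\bar u_2)$ in $H^1_r\times H^1_r$ with $\bar u_1,\bar u_2\ge 0$, bounded multipliers $\lambda_{i,n}\to\bar\lambda_i$ satisfying \eqref{eq3-26-3}--\eqref{eq3-26-4}, and the identity \eqref{yun2-4-1}. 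The only places where the pointwise conditions $\tfrac13\nabla\kappa\cdot x+\kappa\ge 0$ and $\tfrac23\nabla\kappa\cdot x+\kappa\ge 0$ entered are Lemma \ref{le3-26-3} (a uniform lower bound $|\nabla u_{1,n}|_2^2+|\nabla u_{2,n}|_2^2\ge C>0$) and Lemma \ref{le3-26-4} (negativity of $\bar\lambda_1a_1^2+\bar\lambda_2a_2^2$), and these are the two statements I will re-prove under $(K_1)$.

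For the analogue of Lemma \ref{le3-26-3} I keep the identity derived there from \eqref{yun2-4-1} and $\tilde u_{i,n}^-\to 0$,
$$
c+o(1)=J(u_{1,n},u_{2,n})=\frac16\int_{\mathbb{R}^3}|\nabla u_{1,n}|^2+|\nabla u_{2,n}|^2-\int_{\mathbb{R}^3}\Bigl(\tfrac13\nabla\kappa(e^{-s_n}x)\cdot e^{-s_n}x+\kappa(e^{-s_n}x)\Bigr)\tilde u_{1,n}^+\tilde u_{2,n}^+,
$$
but instead of discarding the last integral I bound it from below: $(K_1)$ gives $\nabla\kappa(x)\cdot x\ge -3T_1$, and with $\kappa>0$ this yields $\tfrac13\nabla\kappa\cdot x+\kappa\ge -T_1$, so by Hölder's inequality and $|\tilde u_{i,n}^+|_2\to a_i$ the integral is $\ge -T_1a_1a_2+o(1)$. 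Hence $|\nabla u_{1,n}|_2^2+|\nabla u_{2,n}|_2^2\ge 6(c-T_1a_1a_2)+o(1)$; since $(K_1)$ forces $T_1<T_1+2T_2<\tfrac{c}{2a_1a_2}$, and likewise $(T_1+T_2)a_1a_2<(T_1+2T_2)a_1a_2<\tfrac c2<c$, I may fix a constant $C$ with $6T_2a_1a_2<C<6(c-T_1a_1a_2)$ and conclude $|\nabla u_{1,n}|_2^2+|\nabla u_{2,n}|_2^2\ge C>0$ for $n$ large. For the analogue of Lemma \ref{le3-26-4}, the same chain of identities used in its proof (combining \eqref{eq3-26-3}, \eqref{eq3-26-4}, \eqref{yun2-4-1} and $\tilde u_{i,n}^-\to 0$) gives
$$
\bar\lambda_1a_1^2+\bar\lambda_2a_2^2=-\frac13\int_{\mathbb{R}^3}|\nabla u_{1,n}|^2+|\nabla u_{2,n}|^2-\int_{\mathbb{R}^3}\Bigl(\tfrac43\nabla\kappa(e^{-s_n}x)\cdot e^{-s_n}x+2\kappa(e^{-s_n}x)\Bigr)\tilde u_{1,n}^+\tilde u_{2,n}^++o(1);
$$
now $(K_1)$ gives $\nabla\kappa\cdot x\ge -\tfrac32 T_2$, and with $\kappa>0$ this yields $\tfrac43\nabla\kappa\cdot x+2\kappa\ge -2T_2$, so the last integral is $\le 2T_2a_1a_2+o(1)$, whence $\bar\lambda_1a_1^2+\bar\lambda_2a_2^2\le -\tfrac{C}{3}+2T_2a_1a_2+o(1)<0$ for $n$ large by the choice of $C$. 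Therefore at least one of $\bar\lambda_1,\bar\lambda_2$ is negative.

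From here the endgame is identical to the end of the proof of Theorem \ref{zh3-23}. Assuming without loss of generality $\bar\lambda_1<0$, the strong-convergence argument of Lemma \ref{le3-25-2} (which uses only $\kappa\in L^p(\mathbb{R}^3)$ with $p>\tfrac32$ and $\bar\lambda_1<0$) gives $u_{1,n}\to\bar u_1$ in $H^1_r$, so $|\bar u_1|_2=a_1$ and $\bar u_1\not\equiv 0$; if $\bar\lambda_2\ge 0$ then the second equation of \eqref{yun1-18-5} would force $-\Delta\bar u_2=\bar\lambda_2\bar u_2+\mu_2\bar u_2^3+\beta\bar u_1^2\bar u_2+\kappa(x)\bar u_1\ge 0$, so Lemma \ref{3-24-2}(a) (with $q=2$ in $\mathbb{R}^3$) gives $\bar u_2\equiv 0$, and then that equation reads $\kappa(x)\bar u_1\equiv 0$, contradicting $\kappa>0$ and $\bar u_1\not\equiv 0$; hence $\bar\lambda_2<0$, a second application of Lemma \ref{le3-25-2}'s argument gives $u_{2,n}\to\bar u_2$ in $H^1_r$, and $(\bar\lambda_1,\bar\lambda_2,\bar u_1,\bar u_2)$ is a nonnegative radial solution of \eqref{yun1-18-5}--\eqref{yun1-18-2}, which the maximum principle upgrades to $\bar u_1,\bar u_2>0$. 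The main obstacle is not conceptual but the calibration of the constants: one must verify that the errors $T_1a_1a_2$ and $T_2a_1a_2$ injected into the two lemmas are absorbed by $c$, which is exactly what the arithmetic part of $(K_1)$, namely $T_1+2T_2<\tfrac{1}{108C_{a_1,a_2}^2a_1a_2}$ together with the chain $\tfrac{1}{108C_{a_1,a_2}^2a_1a_2}\le\tfrac{\inf_{B_{K_2}}J}{2a_1a_2}\le\tfrac{c}{2a_1a_2}$, guarantees; the first inequality of this chain follows from the bound $\inf_{B_{K_2}}J\ge\tfrac12K_2-\tfrac{C_{a_1,a_2}}{4}K_2^{3/2}-|\kappa|_\infty a_1a_2$ obtained in the proof of Lemma \ref{le3-25-7} with $K_2=\tfrac{16}{9C_{a_1,a_2}^2}$ and $|\kappa|_\infty<\tfrac{5}{18C_{a_1,a_2}^2}$, and the second from \eqref{eq3-26-2}, so this chain is automatic and serves only to make the admissible range for $C$ above nonempty.
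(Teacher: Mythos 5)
Your proposal is correct and follows essentially the same route as the paper's own proof: the paper likewise reruns the machinery of Theorem \ref{zh3-23}, uses $(K_1)$ to replace the sign conditions by $\frac13\nabla\kappa\cdot x+\kappa\ge -T_1$ and $\frac23\nabla\kappa\cdot x+\kappa\ge -T_2$ in the identities \eqref{yun2-4-1}, \eqref{eq3-26-3}--\eqref{eq3-26-4} (obtaining $\frac16\int|\nabla u_{1,n}|^2+|\nabla u_{2,n}|^2>\frac c3$ and $\bar\lambda_1a_1^2+\bar\lambda_2a_2^2\le-\frac c6+o(1)$, where you use an auxiliary constant $C$ instead of the explicit $-\frac c6$), and then concludes exactly as in Theorem \ref{zh3-23}. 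The only caveat is your side remark that the chain in $(K_1)$ is ``automatic'': with the literal hypothesis $|\kappa|_\infty<\frac{5}{18C_{a_1,a_2}^2}$ the bound $\inf_{B_{K_2}}J\ge\frac{8}{27C_{a_1,a_2}^2}-|\kappa|_\infty a_1a_2\ge\frac{1}{54C_{a_1,a_2}^2}$ requires $a_1a_2\le 1$ (it is automatic under the normalization $|\kappa|_\infty<\frac{5}{18C_{a_1,a_2}^2a_1a_2}$ used in Theorem \ref{zh3-23}), but this does not affect your argument since the proof only uses $(T_1+2T_2)a_1a_2<\frac c2$, which $(K_1)$ supplies directly.
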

\begin{proof}
We just need to show that one of the limits of $\{\lambda_{1,n}\}$ and $\{\lambda_{2,n}\}$ is negative. By $(K_1)$ and $\kappa(x)>0$ we have
$$\frac{1}{3}\nabla \kappa(x)\cdot x+\kappa(x)\geq -T_1,$$
$$\frac{2}{3}\nabla \kappa(x)\cdot x+\kappa(x)\geq -T_2,$$
and from above two equations we have
\begin{equation}
\begin{split}
c+o(1)&=J(u_{1,n},u_{2,n})\\
&\leq \frac{1}{6}\int_{\mathbb{R}^3} |\nabla u_{1,n}|^2+|\nabla u_{2,n}|^2-\int_{\mathbb{R}^3}(\frac{1}{3} \nabla\kappa(e^{-s_n}x)\cdot e^{-s_n}x+\kappa(e^{-s_n}x))\tilde u_{1,n}^+\tilde u_{2,n}^+\\
&\leq \frac{1}{6}\int_{\mathbb{R}^3} |\nabla u_{1,n}|^2+|\nabla u_{2,n}|^2\nonumber+T_1a_1a_2,
\end{split}
\end{equation}
then we have for $n$ large $\frac{1}{6}\int_{\mathbb{R}^3} |\nabla u_{1,n}|^2+|\nabla u_{2,n}|^2>\frac{c}{3}$.
\begin{equation}
\begin{split}
&\lambda_1a_1^2+\lambda_2a_2^2\\
={}&\lambda_{1,n}a_1^2+\lambda_{2,n}a_2^2+o(1)\\
={}&\int_{\mathbb{R}^3} |\nabla u_{1,n}|^2+|\nabla u_{2,n}|^2-\int_{\mathbb{R}^3} \mu_1 u_{1,n}^4+\mu_2 u_{2,n}^4+2\beta u_{1,n}^2u_{2,n}^2-2\int_{\mathbb{R}^3} \kappa(x)u_{1,n}u_{2,n}+o(1)\\
={}&-\frac{1}{3}\int_{\mathbb{R}^3} |\nabla u_{1,n}|^2+|\nabla u_{2,n}|^2-(\int_{\mathbb{R}^3}  (\frac{4}{3}\nabla\kappa(e^{-s_n}x)\cdot e^{-s_n}x+2\kappa(e^{-s_n}x))\tilde u_{1,n}^+\tilde u_{2,n}^+)+o(1)\\
\leq{}& -\frac{1}{3}\int_{\mathbb{R}^3} |\nabla u_{1,n}|^2+|\nabla u_{2,n}|^2+2T_2a_1a_2+o(1)\\
<{}&-\frac{2}{3}c+\frac{1}{2}c+o(1)\nonumber\\
={}&-\frac{1}{6}c+o(1),
\end{split}
\end{equation}
we know that one of $\bar\lambda_i$, $i=1,2$ is negative, following the steps of the proof of Theorem \ref{zh3-23}, we can get Theorem \ref{th3-26-1}.
\end{proof}
\begin{remark}
From above arguments we can relax the condition $\kappa(x)>0$ to $\kappa(x)\geq 0$ and $\kappa(x)\not\equiv 0$ or $\kappa(x)<0$ to $\kappa(x)\leq 0$ and $\kappa(x)\not\equiv 0$ in Theorem \ref{th3-23-1}, Theorem \ref{th3-23-2}, Theorem \ref{zh3-23}  and Theorem \ref{th3-26-1}. In fact, if $\bar u_2\equiv 0$, then $\kappa(x)\bar u_1\equiv 0$ and $\bar u_1$ solves the equation
$$-\Delta u-\lambda_1 u=\mu_1 u^3\ \textup{in}\ \mathbb{R}^3,$$
where $\lambda_1<0$ (see the proof of Theorem \ref{zh3-23}). It has a unique positive solution up to  translations. So $\kappa(x)\bar u_1\equiv 0$ is impossible, thus $\bar u_2\not\equiv 0$.
\end{remark}

\end{document}